\newtheorem{lema}{Lemma}[section]
\newtheorem{remark}[lema]{Remark}
\newtheorem{definition}[lema]{Definition}
\newtheorem{example}[lema]{Example}
\newcommand{\bea}{\begin{eqnarray*}}
\newcommand{\eea}{\end{eqnarray*}}
\newcommand{\zz}[1]{}
\newtheorem{theorem}[lema]{Theorem}
\newtheorem{proposition}[lema]{Proposition}
\newtheorem{corollary}[lema]{Corollary}
\newtheorem*{mainproblem}{Main Problem}
\newcommand{\g}{\mathrm g}
\newcommand{\tr}{\mathrm tr}
\newcommand{\Mod}{\mathrm{Mod}}
 \newcommand{\NN}{{\mathbb{N}}}
 \newcommand{\ZZ}{{\mathbb{Z}}}
 \newcommand{\RR}{{\mathbb{R}}}
 \newcommand{\CC}{{\mathbb{C}}}
 \newcommand{\QQ}{{\mathbb{Q}}}
\newcommand{\reg}{{\rm reg}}
\newcommand{\per}{{\rm Per}}
\newcommand{\cA}{{\mathcal A}}
\newcommand{\cAP}{{\mathcal AP}}
\newcommand{\cU}{{\mathcal U}}
\begin{document}

\title[Realizability of the algebraic periods for  Morse--Smale diffeomorphisms]{Every finite set of natural numbers is realizable as algebraic periods  of a Morse--Smale diffeomorphism}
\author[G. Graff]{Grzegorz Graff$^*$}
\thanks{Research supported by the National Science Centre, Poland,
within the grant Sheng 1 UMO-2018/30/Q/ST1/00228. The third author was also supported by the Slovenian Research and Innovation Agency program P1-0292 and grant J1-4001.}
\address{$^*$ \; Faculty of Applied Physics and Mathematics, Gda\'nsk University of
Technology, Narutowicza 11/12, 80-233 Gda\'nsk, Poland}
\email{grzegorz.graff@pg.edu.pl}
\author[W. Marzantowicz]{Wac{\l}aw Marzantowicz$^{**}$}
\address{$^{**}$ \;Faculty of Mathematics and Computer Science, Adam Mickiewicz University, Pozna{\'n}, ul. Uniwersytetu Pozna{\'n}skiego 4, 61-614 Pozna{\'n}, Poland}
\email{marzan@amu.edu.pl}

\author[{\L}. P. Michalak]{{\L}ukasz Patryk Michalak$^{**}$\,'\,$^{***}$}
\address{$^{***}$ \;Institute of Mathematics, Physics and Mechanics, Jadranska 19, SI-1000 Ljubljana, Slovenia}
\email{lukasz.michalak@amu.edu.pl}
\author[A. Myszkowski]{Adrian Myszkowski$^{\diamond}$}
 \address{$^\diamond$ \;Independent Researcher}
\email{adrian.myszkowski@pg.edu.pl}

\subjclass[2020]{Primary 37C25 37E30, 57K20; Secondary 55M20, 37D15}
\keywords{Minimal periods,  Lefschetz numbers, surfaces, Morse--Smale diffeomorphisms, Nielsen--Thurston classification}

\begin{abstract}

A given self-map $f\colon M\to M$ of a compact manifold determines the sequence $(L(f^n))$ of the Lefschetz numbers of its iterations. We consider its dual sequence $(a_n(f))$ given by the M{\"o}bius inversion formula.  The set $\cAP(f)=\{ n\in \mathbb N \colon a_n(f)\neq 0\}$ is called the set of algebraic periods.

We solve an open problem existing in literature by showing that for every finite subset $\cA$ of natural numbers, there exist an orientable surface $S_{\rm g}$, as well as a non-orientable surface $N_\g$,  of genus ${\rm g}$,  and a Morse--Smale diffeomorphism  $f$ of this surface such that $\mathcal{AP}(f)=\mathcal{A}$.
For such a map it implies the existence of points of a minimal period $n$ for each odd $n \in \mathcal{A}$.
For the orientation-reversing Morse--Smale diffeomorphisms of $S_\g$, we identify strong  restrictions on $\cAP(f)$.
Our method  also provides an estimate of the number of conjugacy classes of mapping classes containing Morse--Smale diffeomorphisms, which is exponential in  $\g$.

\end{abstract}
\maketitle

\section{Introduction} \label{sec:Introduction}

Let $f \colon X\to X$ be a self-map of a topological space $X$. One of the classical problems of  the topological theory of dynamical systems is to find $n$-periodic points of $f$, i.e. $x\in X$ such that $f^n(x)=x$ for some $n\in \NN$.
If $ f^k(x)\neq x$ for $k<n$, then $n$ is called the minimal period of $f$ at $x$, and $x$ is a point of period $n$. Then, the set $\{x, f(x), f^2(x), \, \dots\,, f^{n-1}(x)\}$ is called the orbit of $x$.
We
denote by $P^n(f)={\rm Fix}(f^n)$ the set of all $n$-periodic points, and by $P_n(f)$ the set of  all points  of period $n$. We have $P_n(f) = P^n(f) \setminus
{\underset{k|n, k<n}{\,\bigcup\,}}P^k(f)$.

An important invariant describing the dynamics of $f$ is the set of all minimal periods of $f$,  denoted by $\per (f)=\{n\in \NN:\, P_n(f) \neq \emptyset\}$ .

One of the natural problems in the field of periodic point theory is the following question: {\it Supposing that  $\cA$  is a subset of natural numbers, is it possible to realize it as the minimal periods of a self-map of some topological space $X$?} This problem has been extensively studied (mainly from the arithmetical point of view) by many authors (cf. \cite{EPPW} and the expository paper \cite{BGW}), even in the stronger version when the number of orbits of  each minimal period is also given.  In general, it turns out that there are no obstacles for such realizability for a very general class of maps and relatively simple spaces.
In particular, any (also infinite) sequence of natural numbers can be realized for smooth maps on a two-dimensional torus $\mathbb T^2$ \cite{Windsor}.
However, the approach in \cite{Windsor} provides the realization in the homotopy class of the identity map, so the periodic points are not detectable by topological/homotopical methods. In this paper, we address a more subtle version of the problem from the perspective of homotopy invariants and
for a more restricted class of maps (see Main Problem below). We also discuss the relations of our problem with the existence of periodic points (refer to Proposition~\ref{prop:periodic for transversal} and Corollary \ref{coro: main analytical}).

\begin{mainproblem}
	Can any finite set $\cA$ of  natural numbers be the set of algebraic  periods (see Definition \ref{defi:algebraic periods}) of a $C^1$ Morse–Smale diffeomorphism
	on a closed surface (orientable or non-orientable) of some genus $g$?
\end{mainproblem}
Recall that every Morse--Smale diffeomorphism has only a finite number of periodic points, which follows easily from its definition (cf. \cite{Smale}).

Below, we sketch the motivation to study  the problem in such a formulation.  The tools for posing and solving this problem are closely related to the classical concept of algebraic topology, namely the Lefschetz number $L(f)$, which is homotopy invariant (see \cite{Jez-Mar} for a comprehensive overview and details).

With the sequence $(L(f^n))$,  we associate the dual sequence $(a_n(f))$ given by the M{\"o}bius inversion formula
\begin{equation}\label{equa:dual sequences}
	a_n(f)=\frac{1}{n}\, \sum_{k\mid n}\,  \mu(n/k) \, L(f^k) = \frac{1}{n}\, \sum_{k\mid n}\, \mu(k) L(f^{n/k}),
\end{equation}
where $\mu\colon \NN \to \{-1, 0, 1\}$  is the M{\"o}bius function.
From the M{\"o}bius  inversion formula, it follows that $\;\;
L(f^n)=  \sum_{k\mid n}\,  k\, a_k(f).$
By definition, $a_n(f) \in \QQ$,  but in fact  they are integers due to  Dold's theorem (cf. \cite{Dold}).

\begin{definition}\label{defi:algebraic periods}
	The set
	$\;\, \{n\in \NN:\, a_n(f) \,\neq \,  0\}\;\,$
	is called {\it the set of algebraic periods} of $f$ and is denoted $\cAP (f)$.
\end{definition}

\begin{example}\label{rema:algebraic periods for finite sets}\rm
	Let $X$ be a finite set and $f\colon X\to X$ a map. Then, $L(f^n) = \vert P^n(f) \vert $, and consequently $a_n(f) = \vert P_n(f)\vert$. Thus, in this example,  $a_n(f) \neq 0 $ implies $ P_n(f) \neq \emptyset $ which gives $n\in \per (f)$.
\end{example}

In general, the non-vanishing of $a_n(f)$ does not imply the existence of points of period $n$. However, for some special classes of smooth self-maps of manifolds, such  as the Morse--Smale diffeomorphisms or transversal $C^1$ maps,
$a_n(f) \neq 0$ implies $P_n(f) \neq \emptyset$ if $n$ is odd, or $P_n(f) \cup P_{\frac{n}{2}}(f)  \neq \emptyset$ if $n$ is even (see Proposition \ref{prop:periodic for transversal}).
The above led Llibre, Sirvent, and other collaborators to  study $\cAP(f)$, or more precisely its odd part $\cAP_{odd}(f)=\cAP(f) \cap (2\NN-1)$. They had been considering   situations when it is computable by direct algebraic topology methods (see \cite{LlSi3} for an exposition, and the bibliography  of the results of this group up to 2013).
In fact, instead of algebraic periods, they considered a subset of natural numbers called ``the minimal set of Lefschetz periods" denoted by ${\rm MPer}_L(f)$ (see Definition \ref{defi::minimal Lefschetz periods}). Later, these authors noted  that this set does not contain even numbers (cf. \cite{LlSi2}). Recently, Graff, Lebied\'z, and Myszkowski showed in \cite{GLM}  that ${ \rm MPer}_L(f) = \cAP_{odd}(f) $, which justifies to use  only the notion of algebraic periods. The proof of \cite{GLM}  uses the periodic expansion (cf.~the~formula~(\ref{formula:k-periodic expansion})) and  is based on the local expression of the fixed point index as a combination of basic periodic sequences. In the appendix,  we provide a purely algebraic proof of the fact that ${ \rm MPer}_L(f) = \cAP_{odd}(f) $ in Theorem~\ref{prop:final algebraic periods}.

A direct inspiration for this work was the papers  \cite{LlSi1, LlSi3} and the following question posed there:

{\it
	Can any finite set $\cA$ of odd positive integers be the minimal set of Lefschetz periods for a $C^1$ Morse–Smale diffeomorphism on some orientable/non-orientable
	compact surface without boundary with a convenient genus $\g$?}

As stated by the authors in \cite{LlSi1}, in all their works they do not characterize the sets of homotopy classes of Morse--Smale diffeomorphisms.
They only described the sets called the minimal sets of Lefschetz periods ${\rm MPer}_L(f)$ (Definition \ref{defi::minimal Lefschetz periods}), i.e. $\cAP(f) \setminus 2\NN$ of  quasi-unipotent homeomorphisms
(i.e. homeomorphisms for which the induced linear map $H_*(f)$  on the homology of $M$ has spectrum consisting of the roots of unity only).
Remind that if $f \colon M\to M$ is a Morse--Smale diffeomorphism, then $f$ is quasi-unipotent by the Shub result \cite{Shub}, but not conversely. Consequently, their results gave only the necessary conditions, i.e. the restrictions on the algebraic periods of Morse--Smale diffeomorphisms never providing a geometrical realization.   These restrictions were obtained by purely algebraic computation of the Lefschetz zeta function based on known forms of the homology groups, or cohomology rings, of studied  manifolds.

In this work, by a direct geometrical argument we give a complete positive answer to the Main Problem, i.e. a   stronger version of the stated above question Llibre and Sirvent, proving that for any finite set  $\cA \subset \NN$ there exists an orientable surface of genus $\g$ and a preserving orientation diffeomorphism $f \colon S_\g\to S_\g$  such that $\cAP(f)= \cA$ (Theorem \ref{thm:main1}).
In the case in which demand that the realization is performed in the class of reversing orientation Morse--Smale diffeomorphism, we find strong restrictions on $\cAP(f)$, namely
$\cAP(f) \cap (2\NN -1)= \emptyset$, i.e. the set of algebraic periods consists only of even numbers if $f$ is a reversing orientation diffeomorphism of an orientable surface, which was shown by A.~Myszkowski in his unpublished PhD thesis \cite{Mysz}, but the result follows also from the paper of Blanchard and Franks \cite{Bla-Fra}, and we present this approach in Theorem \ref{prop:lefschetz_numbers_of_odd_iterations_are_zero_for_reversing_orientation}. This result corrects the wrong statement regarding $S_3$ in  \cite{LlSi1} (Theorem 5.7).
Finally, we show that for any finite set  $\cA \subset \NN$, there exists a non-orientable surface of genus $\g$ and a Morse--Smale diffeomorphism $f \colon N_\g\to N_\g$ such that $\cAP(f)= \cA$ (Theorem \ref{thm:main1}).

\begin{theorem}\label{thm:main1}
	Let $\cA$ be a finite set of natural numbers.
	There exist a Morse--Smale diffeomorphism $f$ on a closed surface such that $\cA = \cAP(f)$. The surface can be chosen both orientable and non-orientable. Moreover, $f$ can be orientation-reversing (in the orientable case) if and only if $\cA \subset 2\NN$.
\end{theorem}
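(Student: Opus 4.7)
The plan is to reduce the realization problem to a combinatorial question about Lefschetz sequences, and then to solve it by constructing building-block Morse--Smale diffeomorphisms realizing a single prescribed period and assembling them via equivariant connected sums. For a Morse--Smale diffeomorphism whose hyperbolic periodic points all have positive real eigenvalues, each periodic orbit of minimal period $k$ has constant $f^k$-index $\varepsilon=\pm 1$ at each of its points and contributes $k\varepsilon\cdot\mathbf{1}_{k\mid n}$ to $L(f^n)$; each fixed source or sink contributes $+1$, and each fixed saddle contributes $-1$, to every $L(f^n)$. A direct M\"obius inversion via (\ref{equa:dual sequences}) then shows that a periodic orbit of minimal period $k>1$ contributes exactly $\varepsilon$ to $a_k(f)$ and zero to $a_j(f)$ for $j\ne k$, while the fixed-point contributions affect only $a_1(f)$. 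Thus $\cAP(f)=\cA$ will follow from building a Morse--Smale map whose non-fixed periodic orbits are exactly one orbit of minimal period $k$ for each $k\in\cA\setminus\{1\}$, with a background of fixed points whose signed count is tuned to control $a_1(f)$.

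\textbf{Orientable, orientation-preserving case.} For each $k\in\cA\setminus\{1\}$, I construct a model Morse--Smale diffeomorphism $g_k$ on an orientable surface $\Sigma_k$ of controlled genus realizing a single orbit of minimal period $k$. A concrete recipe is to take two copies of $S^2$ joined by $k$ tubes permuted cyclically by a $\ZZ/k$-symmetry $\rho$; install $\rho$-invariant north--south gradient-like dynamics on the two $S^2$-pieces, place a hyperbolic saddle in a fundamental domain inside one tube, and extend by $\rho$ to get $k$ saddles forming a single orbit under $g_k=\rho\circ\phi$ (with $\phi$ an equivariant perturbation making $g_k^k$ hyperbolic at each saddle). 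Then $L(g_k^n)=C_k+k\varepsilon_k\cdot\mathbf{1}_{k\mid n}$ and $\cAP(g_k)=\{1,k\}$. Given $\cA$, assemble $f$ on $S_\g$ by performing equivariant connected sums of the $g_{k_i}$ at compatible source/sink fixed points; each surgery only changes $L(f^n)$ by a constant and so affects only $a_1$. Auxiliary canceling source/sink pairs on trivial handles then tune $a_1(f)$ to $0$ whenever $1\notin\cA$. The non-orientable realization is analogous, replacing $S^2$ by $\RR P^2$ and using non-orientable tubes.

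\textbf{Orientation-reversing characterization.} The only-if direction is Theorem~\ref{prop:lefschetz_numbers_of_odd_iterations_are_zero_for_reversing_orientation}: an orientation-reversing $f\colon S_\g\to S_\g$ has $L(f^n)=0$ for every odd $n$, and then (\ref{equa:dual sequences}) forces $a_n(f)=0$ for every odd $n$, so $\cAP(f)\subset 2\NN$. For the converse with $\cA\subset 2\NN$, I modify each building block $g_k$ so that the cyclic-symmetry generator $\rho$ is itself orientation-reversing (possible precisely because $k$ is even); the equivariant assembly then yields an orientation-reversing diffeomorphism with the prescribed algebraic periods.

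\textbf{Main obstacle.} The principal difficulty will be verifying that the globally assembled map is genuinely Morse--Smale---in particular that the stable and unstable manifolds of all periodic points meet transversally after the surgeries. This is handled by performing each connected sum inside a linearization neighborhood of a source or sink fixed point, where the local invariant manifolds are radial and decouple from the saddle manifolds elsewhere; any residual heteroclinic tangencies are removed by a generic perturbation confined to the surgery regions. The genus bookkeeping is routine since the theorem allows $\g$ to depend on $\cA$.
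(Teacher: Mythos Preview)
Your approach differs substantially from the paper's. The paper never constructs a Morse--Smale diffeomorphism directly; instead it builds a homeomorphism of \emph{algebraically finite type} (periodic rotations on pieces $\Sigma_n$, joined by cylinders carrying interpolating rotations), computes $L(f^n)$ from the permutation-block structure of $H_1(f)$, and then invokes da Rocha's Theorem~\ref{thm:da Rocha} to obtain a Morse--Smale representative in the same isotopy class. Your building blocks $g_k=\rho\circ\phi$ (rotation composed with an equivariant gradient-like map) are essentially the paper's periodic pieces made hyperbolic by hand---in effect you are reproving the periodic case of da Rocha's theorem. The trade-off: your route is more self-contained but must verify the Morse--Smale axioms directly and compute Lefschetz numbers by enumerating periodic orbits; the paper's route outsources all transversality issues to da Rocha and reads $L(f^n)$ straight off from $\tr H_1(f^n)$.

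Your orientation-reversing construction has a real gap. First, the opening paragraph's index calculus breaks down: an orientation-reversing fixed source in dimension two has one positive and one negative eigenvalue, so its index under $f^n$ alternates with the parity of $n$ rather than being constantly $+1$; thus fixed points contribute to $a_2$ as well as to $a_1$, and your bookkeeping no longer isolates the contribution of each period-$k$ orbit cleanly. Second, ``make $\rho$ orientation-reversing'' for each even $k$ and then assemble by connected sums at fixed sources/sinks is not spelled out---such a $\rho$ may well have no fixed points at all, forcing an equivariant connected sum along a period-$2$ orbit, which you do not describe. The paper avoids both problems by a different device: it forms the double $D(S)$ of an orientation-preserving model and composes with the swap involution $j$ (Theorem~\ref{thm:realization by algebraically finite}(2)), so that $f'=f\circ j$ is manifestly of algebraically finite type; da Rocha then supplies the Morse--Smale representative with no further analysis. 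A minor point on the orientation-preserving side: ``canceling source/sink pairs'' is backwards---fixed sources and sinks both contribute $+1$, so adding a pair raises $a_1$; the correct move (and what the paper does when $1\notin\cA$) is to adjoin an extra genus-one piece with trivial rotation, which lowers $a_1$ by $2$.
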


One of the consequences of the construction provided in Theorem \ref{thm:main1}  is  an  exponential growth in $\g$ of the  number of different conjugacy classes of algebraically finite type mapping classes, so, consequently, of conjugacy classes of homotopy classes of the Morse--Smale diffeomorphisms on a surface of a given genus (Theorem~\ref{thm:estimate of algebraically finite}).

Another important statement that we draw from our main theorem is the fact that for Morse--Smale diffeomorphisms (as well as more for a more general class of transversal maps) we are able to provide not only the realization of algebraic periods $n\in \cAP (f)$, but also an $n$-periodic point for $n$ odd, see Subsection
\ref{trans}.



\section{Description of the homotopy classes of homeomorphisms of surfaces which contain the Morse--Smale homeomorphism}\label{sec:homotopy classes of MS}

In this section, we give a brief survey of known results that describe the homotopy classes of homeomorphisms, thus diffeomorphisms,   of  surfaces which contain the Morse--Smale diffeomorphisms.  From now on, we assume that our surface is hyperbolic, i.e. it  is of genus $\g\geq 2$. The case of surfaces with non-negative Euler characteristic was studied separately in \cite{Rocha}.

First, we recall the Thurston classification theorem \cite{Farb-Marg,FLP}, also known as the  Nielsen--Thurston classification.

Let $f$ be a homeomorphism of  a closed orientable surface $S_\g$
of genus $ \g \geq   2$. Denote by $[f]$  the set of all homeomorphisms of the surface $S_\g$ that
are homotopic to $f$  (the homotopy class of homeomorphisms containing $f$).
According to the Nielsen--Thurston classification (see  \cite[Thms 11.6,  11.7]{FLP} and 
\cite{Gri-Mor-Poch}),
the set of all homotopy classes of homeomorphisms on $S_\g$  is represented as the
union of four disjoint subsets T1, T2, T3, and T4  distinguished by the conditions described below.

\begin{theorem}[Nielsen--Thurston]\label{thm:Thurston classification} Let $f$ be a homeomorphism of a closed orientable surface $S_\g$, $\g\geq 2$.
	\begin{itemize}
		\item[1.] {If  $[f] \in  T1$, then $[f]$ contains a periodic homeomorphism;}
		\item[2.] {if  $[f] \in T2$, then  $ [f]$  contains a reducible non-periodic homeomorphism of algebraically
			finite type;}
		\item[3.] {if $[f] \in   T3$, then $[ f]$  contains a reducible homeomorphism which is neither periodic nor  of algebraically finite type;}
		\item[4.] {if $[f] \in  T4$, then $[ f ]$ contains a pseudo-Anosov homeomorphism.}
	\end{itemize}
\end{theorem}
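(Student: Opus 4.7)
The plan is to invoke the classical Nielsen--Thurston trichotomy and then refine the reducible case into the subclasses T2 and T3 as stated. First I would appeal to the standard form of the classification: every mapping class on a closed orientable hyperbolic surface $S_\g$ contains a representative that is either periodic, reducible and non-periodic, or pseudo-Anosov, and these alternatives are mutually exclusive. This immediately yields T1 and T4 and isolates the family of non-periodic reducible classes, which must be further dissected.

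To separate T2 from T3, I would use the canonical reduction system of $[f]$, i.e. the (essentially unique) $f$-invariant multicurve $\sigma$ such that cutting $S_\g$ along $\sigma$ produces components on which $f$ restricts, up to isotopy, to either a periodic homeomorphism or a pseudo-Anosov homeomorphism. By the definition of an \emph{algebraically finite type} mapping class, $[f]$ belongs to T2 precisely when every such restricted piece is periodic, so that all residual dynamics come from powers of Dehn twists around the curves of $\sigma$. Otherwise at least one piece carries genuine pseudo-Anosov dynamics, which puts $[f]$ into T3. Disjointness of the four strata is then verified by standard invariants: pseudo-Anosov classes are distinguished by having an irreducible action with two fixed projective measured foliations, periodicity is detected by finite order in $\mathrm{Mod}(S_\g)$, and the existence of an invariant multicurve separates the reducible stratum from T1 and T4.

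The hard part is the trichotomy itself, and I would not reprove it. The classical route is Thurston's argument via the compactification of Teichm\"uller space by the sphere of projective measured foliations: the extended action of $[f]$ on this closed ball has a fixed point by a Brouwer-type theorem, and a case analysis on whether the fixed point lies in Teichm\"uller space (yielding periodicity by Nielsen realization), on the boundary with a supporting multicurve (reducible), or on the boundary with an uniquely ergodic minimal foliation (pseudo-Anosov) produces the three alternatives; the further separation of T2 and T3 is the refinement made explicit in \cite{Gri-Mor-Poch}. Accordingly, my proposed ``proof'' is a pointer to the expositions \cite{Farb-Marg,FLP} for the trichotomy and to \cite{Gri-Mor-Poch} for the T2/T3 distinction, with the verification above that the three cited subdivisions exhaust and are disjoint on hyperbolic surfaces.
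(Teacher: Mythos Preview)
Your proposal is correct and matches the paper's treatment: the paper does not prove this theorem either, but simply cites it as the classical Nielsen--Thurston classification (referring to \cite[Thms 11.6, 11.7]{FLP} and \cite{Gri-Mor-Poch}) and then informally explains the T2/T3 split via cutting along an invariant multicurve into periodic and pseudo-Anosov pieces. Your outline of the trichotomy via the Teichm\"uller compactification and the refinement of the reducible case through the canonical reduction system is a faithful, slightly more detailed version of exactly this citation-plus-heuristic approach.
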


The classes $T1$ and $T2$ are called {\it algebraically finite type} in the original Nielsen terminology. Nielsen--Thurston theory was  developed for  orientable surfaces of genus $\g \geq  2$. The case $g=0$ (the sphere) is trivial, and for the case $g=1$ (the torus) the classification of homotopy classes of its homeomorphisms is provided algebraically by elements of $SL(2,\ZZ)$, which is a classical fact (cf. \cite{Farb-Marg}).

Roughly speaking, in the reducible case (T2 and T3),  Theorem \ref{thm:Thurston classification} states that one can cut a surface along the invariant collection of closed curves, obtaining a “canonical form”. This constitutes a collection of surfaces with boundary, satisfying that for each of them some iteration of the map is its self-homeomorphism. As a consequence,  any such mapping class can be reduced into either only periodic pieces (T2) or periodic and pseudo-Anosov pieces (T3). Thus, a mapping class is of algebraically finite type if all its pieces (even if one) in its Nielsen--Thurston reduction are periodic.

The analogue of Theorem \ref{thm:Thurston classification} for non-orientable surfaces was shown by   Yingqing Wu in \cite{Wu} (see \cite{Paris} for an exhaustive exposition). The statement of the mentioned version has the same formulation, and the main idea of its proof is to proceed with $\tilde{f}^+ \colon S_\g\to S_\g$ the preserving orientation lift of a homeomorphism $f \colon N_\g \to  N_\g$, where $p\colon S_\g \to N_\g$ is a two-sheets orientable cover of the non-orientable surface  $N_\g$.

Before our next consideration, let us recall the
Baer--Epstein theorem (cf. \cite{Farb-Marg}).\\

\begin{proposition}
	For homeomorphisms of a surface,  the homotopy is equivalent to the isotopy.
\end{proposition}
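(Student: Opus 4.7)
The isotopy-implies-homotopy direction is immediate (the isotopy is itself a homotopy). For the converse, if $f,g\colon S\to S$ are homotopic homeomorphisms then $h:=g^{-1}\circ f$ is homotopic to $\mathrm{id}_S$, and any isotopy $H_t$ from $h$ to $\mathrm{id}_S$ gives the isotopy $g\circ H_t$ from $f$ to $g$. So it suffices to prove that every self-homeomorphism $h$ of the closed surface $S$ (orientable or not) homotopic to the identity is isotopic to the identity.

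\textbf{Step 1: curve-level Baer theorem.} I would first establish that two essential simple closed curves $\alpha,\beta\subset S$ which are freely homotopic are ambient isotopic. Putting $\alpha,\beta$ in general position and lifting to the universal cover (which is $\HH^2$ for $\g\geq 2$, $\RR^2$ for $\g=1$), one proves the \emph{bigon criterion}: the geometric intersection number agrees with the algebraic one. Since $[\alpha]=[\beta]$ in $\pi_1$, both are zero, so any intersection forces an innermost bigon, which can be eliminated by an isotopy; iterating gives $\alpha\cap\beta=\emptyset$. The homotopy then shows $\alpha\cup\beta$ bounds an embedded annulus, which produces the ambient isotopy.

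\textbf{Step 2: reduction along a pants decomposition.} Fix a pants decomposition $\{\gamma_1,\dots,\gamma_N\}$ of $S$ (for low-complexity surfaces $S^2,\mathbb{T}^2,\bp^2$ and the Klein bottle, a direct argument replaces this step). Since $h\simeq \mathrm{id}$, each $h(\gamma_i)$ is homotopic to $\gamma_i$. Applying Step 1 iteratively, and using its relative version on the successively cut surfaces, isotope $h$ so that $h(\bigcup_i \gamma_i)=\bigcup_i \gamma_i$ setwise. A further isotopy supported in annular collar neighborhoods of the $\gamma_i$ arranges $h|_{\bigcup \gamma_i}=\mathrm{id}$. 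On each pair of pants $P_j$ the restriction $h|_{P_j}$ then fixes $\partial P_j$ pointwise and is homotopic rel boundary to the identity, so by triviality of the mapping class group of a pair of pants relative to its boundary (an Alexander-type radial straightening argument), $h|_{P_j}$ is isotopic to $\mathrm{id}_{P_j}$ rel $\partial P_j$. Gluing these isotopies across the $\gamma_i$ yields an isotopy of $h$ to $\mathrm{id}_S$. For non-orientable $N_\g$, I would lift to the orientation double cover $p\colon S_\g\to N_\g$, observe that the deck involution commutes with the chosen lift of $h$, and descend an equivariant version of the above isotopy, following Wu's strategy already invoked in Section~\ref{sec:homotopy classes of MS}.

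\textbf{Main obstacle.} The delicate point is Step 2: the successive curves must be made invariant \emph{relative} to those fixed earlier, which requires a careful relative bigon criterion on surfaces with boundary, plus an annulus-theorem argument to upgrade disjointness to an ambient isotopy compatible with the already-arranged data. Handling orientations along each $\gamma_i$ (so that one can isotope $h|_{\gamma_i}$ to $\mathrm{id}$ rather than to a reflection or a Dehn-twist-like shift) is where bookkeeping is heaviest; the low-complexity and non-orientable cases contribute the remaining technical overhead.
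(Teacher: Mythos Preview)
The paper does not prove this proposition; it simply records it as the Baer--Epstein theorem with a reference to \cite{Farb-Marg} and moves on. So there is no in-paper argument to compare against, only the standard literature proof.

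Your outline, however, contains a genuine gap in Step~2. The mapping class group of a pair of pants relative to its boundary is \emph{not} trivial: it is isomorphic to $\ZZ^3$, generated by Dehn twists about curves parallel to the three boundary circles. The ``Alexander-type radial straightening'' you invoke works for a disc, not for $S_{0,3}$. Concretely, once you have isotoped $h$ to be the identity on $\bigcup_i\gamma_i$, nothing prevents $h|_{P_j}$ from being a nontrivial product of boundary twists; such twists on adjacent pants glue up to Dehn twists about the $\gamma_i$ themselves, and these are invisible to the pants decomposition. Relatedly, your assertion that $h|_{P_j}$ is ``homotopic rel boundary to the identity'' is unjustified: the global homotopy $h\simeq\mathrm{id}_S$ does not automatically restrict to a rel-boundary homotopy on each piece. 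The standard remedy --- and what is actually done in \cite{Farb-Marg} --- is to use a \emph{filling} system of curves (or curves together with arcs) that cuts $S$ down to a single disc, so that the Alexander lemma genuinely applies at the final stage and all twist ambiguity has already been detected and killed by the transverse curves. Your ``Main obstacle'' paragraph touches on the twist issue but labels it bookkeeping; in fact it is the crux of the proof, and a pants decomposition alone cannot resolve it.
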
  

Moreover, by the density argument, every homotopy class of homeomorphisms contains a diffeomorphism, and an isotopy between two diffeomorphisms can be replaced by a smooth isotopy. Accordingly,  in many formulations of Theorem \ref{thm:Thurston classification},  the isotopy classes are used instead of the homotopy classes.

The forthcoming part of the paper is based  on the following result of Luis~F. da Rocha \cite[THM A]{Rocha}

\begin{theorem}[L. F. da Rocha, 1985] \label{thm:da Rocha}
	If $M^2$ is a two-dimensional compact connected boundaryless manifold (orientable or not)
	with a negative Euler characteristic, then
	the condition that $f$ is of algebraically finite type
	is necessary and sufficient for an isotopy class of $f$ of $M^2$ to be Morse--Smale, i.e. it contains a Morse--Smale representant.

	In the case $\chi(M^2 ) \geq  0 $, we have the following situation:
	\begin{itemize}
		\item[(a)] {if $M^2$ is the sphere, the projective plane, or the Klein bottle, then every isotopy
			class is Morse--Smale;}
		\item[(b)] if $M^2$ is the two dimensional torus, an isotopy class is Morse--Smale if and
		only if
		$$ H_1(f): H_1({\mathbb T}^2; \CC) \to  H_1({\mathbb T}^2; \CC) $$
		has eigenvalues in the set $\{\pm 1, \pm \imath, 1/2 \pm (\sqrt{3}/2)\imath, -1/2  \pm (\sqrt{3}/2)\imath\}$.
	\end{itemize}
\end{theorem}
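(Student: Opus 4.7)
I would route the proof through the Nielsen--Thurston classification (Theorem~\ref{thm:Thurston classification}, with Wu's non-orientable analogue). On a hyperbolic surface every mapping class lies in exactly one of $T1,T2,T3,T4$, and ``algebraically finite type'' is precisely $T1\cup T2$. So the hyperbolic part of the statement becomes: $[f]$ contains a Morse--Smale representative iff $[f]\in T1\cup T2$. The cases $\chi(M^2)\geq 0$ are treated separately by direct analysis of the (essentially finite) mapping class groups.

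\textbf{Necessity on hyperbolic surfaces.} The two pillars are: (a) every Morse--Smale diffeomorphism has only finitely many periodic points (with linear-type bound $\#\Fix(g^n)=O(1)$ on primitive Fixed-point counts), and (b) by Shub's theorem, $H_\ast(g;\CC)$ is quasi-unipotent. For $[f]\in T4$, a pseudo-Anosov representative has exponentially growing $\#\Fix(g^n)$ and non--quasi-unipotent action on $H_1$ (its Perron dilatation survives as a spectral invariant of the class), contradicting (a)--(b) for any Morse--Smale candidate in $[f]$. For $[f]\in T3$, after passing to an iterate preserving the canonical Nielsen--Thurston reduction, the restriction to an invariant neighbourhood of a pseudo-Anosov piece reduces the situation to the $T4$ argument.

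\textbf{Sufficiency on hyperbolic surfaces.} For $T1$, take a periodic representative $h$ of order $m$, choose a smooth $\ZZ/m$-invariant Morse function, and use its gradient-like flow to build a Morse--Smale diffeomorphism isotopic to $h$. For $T2$, use the canonical reduction along a disjoint system $\Gamma$ of simple closed curves: on each component of $M^2\setminus\Gamma$ a suitable iterate of $f$ is isotopic to a periodic map, so apply the $T1$ construction there. Then glue across an annular collar of each curve of $\Gamma$ by inserting a standard north--south configuration (typically a source--saddle--sink chain) whose stable/unstable manifolds meet the curve transversely and match the pieces. Verifying that the resulting diffeomorphism is globally Morse--Smale and stays in $[f]$, while preserving the combinatorics of the reduction and the transversality of invariant manifolds, is the main technical obstacle.

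\textbf{Low-complexity cases.} For $S^2$, $\RR P^2$ and the Klein bottle the mapping class group is finite with explicit generators (reflections, the antipodal map, Dehn twist along the orientation-reversing core of the Klein bottle), each realized by a manifestly Morse--Smale model (height functions, linear involutions, or their compositions), giving (a). For $\mathbb{T}^2$ the mapping class group $SL(2,\ZZ)$ splits into elliptic, parabolic and hyperbolic classes; hyperbolic classes contain Anosov representatives with infinitely many periodic orbits, so Shub's theorem excludes them, which is exactly the restriction to eigenvalues in $\{\pm 1,\pm\imath, 1/2\pm(\sqrt{3}/2)\imath,-1/2\pm(\sqrt{3}/2)\imath\}$ (the crystallographic restriction for $SL(2,\ZZ)$). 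Sufficiency in the elliptic case follows from the linear quotient $\RR^2/\ZZ^2$ perturbed in collars by a $\ZZ/m$-equivariant gradient-like flow, and in the parabolic case from a shear composed with a standard Morse--Smale model on the complementary annulus, giving (b).
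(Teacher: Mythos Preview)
The paper does not prove this theorem; it is quoted from da~Rocha \cite{Rocha} and used as a black box. The only part of the argument the paper touches is the necessity direction on hyperbolic surfaces, which it recovers in Remark~\ref{rem:infinitely many periods} and Proposition~\ref{prop:N^(f)=1 equivalent MS} via Boju Jiang's theorem on asymptotic Nielsen numbers: if $[f]$ has a pseudo-Anosov piece with stretching factor $\lambda>1$, then $N^\infty(f)=NI^\infty(f)=\lambda>1$, hence $NP_n(f)\to\infty$, and since $NP_n$ is a homotopy invariant bounding $|P_n(g)|$ from below for every $g\in[f]$, no Morse--Smale $g$ can exist.

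Your necessity argument, by contrast, has a genuine gap. You assert that a pseudo-Anosov representative has ``non--quasi-unipotent action on $H_1$ (its Perron dilatation survives as a spectral invariant of the class)''. This is false: the paper itself recalls Thurston's example of a pseudo-Anosov homeomorphism with $H_1(f)=\mathrm{id}$, so Shub's theorem alone cannot exclude $T4$. Your other pillar, exponential growth of $\#\Fix(\phi^n)$ for the pseudo-Anosov $\phi$, is true but does not by itself contradict the existence of a Morse--Smale $g\in[\phi]$, since $g\neq\phi$ and raw fixed-point counts are not homotopy invariants. To pass from $\phi$ to every representative you need a homotopy-invariant lower bound on periodic points, and that is exactly the Nielsen-theoretic input $NP_n(f)\leq |P_n(g)|$ that the paper invokes. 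The same issue propagates to your $T3$ argument. Your sufficiency outline for $T1\cup T2$ and the low-complexity cases is a reasonable plan, and you correctly flag the gluing across the reduction curves as the main technical point, but that is precisely the substance of da~Rocha's paper and is not reproduced here.
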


\begin{remark}
	
	Theorem \ref{thm:da Rocha}  was reproved by   A.N. Bezdenezhykh and   V.Z. Grines~\cite{Bez-Grin} for the class $T1$, and    by V. Grines,  A. Morozov,  and  O.  Pochinka \cite{Gri-Mor-Poch} for the class $T2$ of homeomorphisms of orientable surfaces. The authors of the referred articles used subtle analytical methods, and provided constructions of such Morse--Smale diffeomorphisms, solving more advanced questions.
\end{remark}

\begin{proposition}\label{fact:bounded_finite_unipotent}
	For a homeomorphism  of $ f \colon M \to M$  of a compact surface $M$, the following are equivalent:
	\begin{itemize}
		\item[1)]{$(L(f^n))$  is bounded,}
		\item[2)]{ $\cAP(f)$ is finite,}
		\item[3)]{$H_1(f)\colon H_1(M; \ZZ) \to H_1(M;\ZZ )$ is quasi-unipotent.}
	\end{itemize}
\end{proposition}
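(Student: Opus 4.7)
The plan is to convert each of the three conditions into a statement about the spectrum of $A:=H_1(f;\QQ)\colon H_1(M;\QQ)\to H_1(M;\QQ)$ and then link them by means of Dold's integrality and Kronecker's theorem. First I would reduce condition~1) to boundedness of $(\tr(A^n))$: since $M$ is a closed connected surface, $H_0(M;\QQ)=\QQ$ and $H_2(M;\QQ)$ equals $\QQ$ or $0$, while $H_0(f^n)=\mathrm{id}$ and $H_2(f^n)$ is multiplication by $(\deg f)^n\in\{-1,1\}$ (or the zero map in the non-orientable case). Hence
\begin{equation*}
L(f^n)\;=\;c_n-\tr(A^n),\qquad c_n\in\{0,1,2\},
\end{equation*}
so $(L(f^n))$ is bounded if and only if $(\tr(A^n))$ is bounded.

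For $1)\Leftrightarrow 2)$ I would exploit the two Möbius-inverse identities in~\eqref{equa:dual sequences}. If $\cAP(f)=\{k_1,\dots,k_s\}$ is finite, the identity $L(f^n)=\sum_{k\mid n}k\,a_k(f)$ expresses each $L(f^n)$ as a sum of a subset of the fixed integers $k_i\,a_{k_i}(f)$, which is manifestly bounded. Conversely, if $|L(f^k)|\le C$ for every $k$, then
\begin{equation*}
|n\,a_n(f)|\;=\;\Bigl|\sum_{k\mid n}\mu(n/k)\,L(f^k)\Bigr|\;\le\;C\,d(n),
\end{equation*}
where $d(n)$ denotes the number of divisors of $n$. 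Since $d(n)/n\to 0$ and $a_n(f)\in\ZZ$ by Dold's theorem, the integer $a_n(f)$ must vanish for all sufficiently large $n$, so $\cAP(f)$ is finite.

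The heart of the argument is $1)\Leftrightarrow 3)$. For the easy direction $3)\Rightarrow 1)$, putting $A$ in Jordan form and noting that nilpotent parts contribute $0$ to the trace gives $\tr(A^n)=\sum_i m_i\lambda_i^n$; if every eigenvalue $\lambda_i$ is a root of unity, this sum is bounded in absolute value by $\dim_{\QQ}H_1(M;\QQ)$. The nontrivial direction $1)\Rightarrow 3)$ is the main obstacle. Because $A$ preserves the free part $H_1(M;\ZZ)/\mathrm{tors}$, its characteristic polynomial lies in $\ZZ[x]$, so each eigenvalue $\lambda_i$ together with all of its Galois conjugates is an algebraic integer. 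To control their moduli I would expand
\begin{equation*}
\sum_{n\ge 1}\tr(A^n)\,z^n\;=\;\sum_{i=1}^{d}\frac{z\lambda_i}{1-z\lambda_i};
\end{equation*}
boundedness of the coefficients on the left forces the rational function on the right to be holomorphic on the open unit disk, so every pole $\lambda_i^{-1}$ satisfies $|\lambda_i^{-1}|\ge 1$, i.e.\ $|\lambda_i|\le 1$. Since $f$ is a homeomorphism, $\det A=\pm 1$, no $\lambda_i$ vanishes, and Kronecker's theorem (a nonzero algebraic integer all of whose Galois conjugates lie in the closed unit disk must be a root of unity) gives quasi-unipotence of $A$. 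Without the integrality of the characteristic polynomial, Salem-type numbers could place eigenvalues on the unit circle that are not roots of unity; the integrality of $H_1(f)$, coupled with its invertibility, is exactly what allows Kronecker's theorem to close the argument.
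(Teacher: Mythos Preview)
Your argument is correct. The overall strategy coincides with the paper's: both reduce $L(f^n)$ to $\pm\tr H_1(f)^n$ plus a bounded term coming from $H_0$ and $H_2$, and both treat $1)\Leftrightarrow 2)$ via the M\"obius relation between $(L(f^n))$ and $(a_n(f))$. The paper, however, simply cites \cite[Theorem 3.1.46]{Jez-Mar} for $1)\Leftrightarrow 2)$ and, for $1)\Leftrightarrow 3)$, writes down the Lefschetz formula and asserts the equivalence without further comment, implicitly relying on the standard fact that an integer matrix has bounded power-traces iff it is quasi-unipotent. Your write-up supplies what the paper omits: the divisor-bound estimate $|a_n(f)|\le C\,d(n)/n$ combined with Dold integrality for $1)\Rightarrow 2)$, and the generating-function/pole-location argument followed by Kronecker's theorem for $1)\Rightarrow 3)$. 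This makes your proof self-contained, at the cost of a few extra lines; the paper's version is terser but not independently verifiable without the cited reference.
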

\begin{proof}
	The equivalence between the first and the second condition is given in \cite[Theorem 3.1.46]{Jez-Mar} (cf. \cite{Bab-Bog}). Since the Lefschetz numbers of iterations of  a diffeomorphism $f$ of a surface are equal to $1 - \tr H_1(f)^n + \pm 1 $ in the orientable case, and respectively  $1 - \tr H_1(f)^n$ in the non-orientable case, the first and third conditions are equivalent.
\end{proof}

However, let us mention here that the spectral radius of the  representation \mbox{$f \mapsto H_1(f)$} is not a strong enough invariant to determine whether $[f] \in T1 \cup T2$ (cf. \cite{FLP} or \cite{Farb-Marg} for the Thurston example of a pseudo-Anosov homeomorphism  $f$ with  $H_1(f)= {\rm id}$).

\section{Proof of main theorem}

In the following considerations we will use the notion of periodic expansion of an arithmetic function $\psi: \NN\to  \CC$ introduced in  \cite{Mar-NP}, exposed widely in \cite{Jez-Mar}, and used in \cite{GLM}, \cite{GLN-P}. In this language, an arithmetic function $\psi$ is represented as a series of elementary periodic functions
\begin{equation}\label{formula:k-periodic expansion}
	\psi(n)=\sum_{k\, | \, n} a_k(\psi) k = \sum_{k=1}^\infty a_k(\psi) \, \reg_k(n) \,,
\end{equation}
where  the coefficients $a_k(\psi)\in \CC $ are given by the M{\"o}bius inversion formula 
applied to the sequence $(\psi(n))$ analogously as it is in (\ref{equa:dual sequences}) for the sequence $(L(f^n))$,
and
\begin{equation}\label{formula:definition_of_reg_k}
	\reg_k(n)= \sum_{l=0}^{k-1} \left(e^{2\pi i \frac{l}{k}}\right)^n = \begin{cases} 0 \; \ {\text{if}}\; \ k\nmid n \cr    k \; \  {\text{if}}\; \  k\mid n \end{cases}
\end{equation}
is the sum of $n$-powers of all roots of unity of degree $k$ and is called the elementary periodic function. Conceptually, the periodic expansion is a discrete correspondent  of the Fourier expansion, arithmetically it is linked  with the Ramanujan sums, and its coefficients are   used to study the behavior of the number sequences which are related to the periodic points (see \cite{EPPW}). In our context, it is just  a convenient language   to handle the sequence we are studying.

Denote by $S_\g$ an orientable, and by $N_\g$ a non-orientable closed surface of genus $\g$. Similarly, $S_{\g,k}$ and $N_{\g,k}$ are compact surfaces of genus $\g$ with $k$ boundary components.

\begin{theorem}\label{thm:realization by algebraically finite}
	
	Let $\cA\subset \NN$ be finite.\begin{itemize}
		\item[(1)] There exist a closed orientable surface $S_\g$ and an orientation-preserving homeomorphism $f\colon S_\g \to S_\g$ of algebraically finite type such that $\cAP(f) = \cA$ 
		and the genus  $\g$ of $S_\g$ is equal to 
		$$
		\sum_{n \in \cA \setminus\{1\}} n\;\;\;{\text{ if}}\;\;\; 1\in \cA\;\;\; {\text{ and}}\;\;\;1+ \sum_{n\in \cA} n \;\;\;{\text{ if}}\;\;\; 1 \notin \cA\,.
		$$
		\item[(2)] There exist a closed orientable surface $S_\g$ and an orientation-reversing homeomorphism $f\colon S_\g \to S_\g$ of algebraically finite type such that $\cAP(f) = \cA$ if and only if $\cA \subset 2\NN$. Moreover, the genus $\g$ of $S_\g$ is equal to 
		$$
		\sum_{4 \,|\, n \in \cA}2n + \sum_{4\, \nmid\, n \in \cA\setminus\{2\}} n\;\;\;{\text{ if}}\;\;\; 2 \in \cA \;\;\;{\text{and}}\;\; \; 2+\sum_{4 \,|\, n \in \cA}2n + \sum_{4\, \nmid\, n \in \cA} n \;\;\;{\text{if}}\;\;\; 2 \notin \cA.
		$$
	\item[(3)] There exist a closed non-orientable surface $N_\g$ and a homeomorphism $f\colon N_\g \to N_\g$ of algebraically finite type such that $\cAP(f) = \cA$ and the genus $\g$ of $N_\g$ is equal to 
	$$
	\sum_{n\in \cA\setminus \{1\}}n \;\;\;{\text{if}} \;\;\; 1 \in \cA \neq \{1\}, \; 2+\sum_{n\in \cA} n \;\;\;{\text{if}}\;\;\; 1 \notin \cA \;\;\;{\text{and}}\;\;\; 1 \;\;{\text{if}}\;\; \cA = \{1\}.
	$$
\end{itemize}
\end{theorem}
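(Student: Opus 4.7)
The strategy is to realise $f$ as a reducible algebraically finite type homeomorphism assembled from periodic pieces, one building block per element of $\cA$. Since by Theorem \ref{thm:da Rocha} every such isotopy class contains a Morse--Smale representative, it suffices to produce a homeomorphism with the prescribed action on $H_1$. On a closed surface one has $L(f^k) = 1 - \tr H_1(f^k) + \varepsilon$ with $\varepsilon \in \{-1, 0, 1\}$ according to orientability and the orientation behaviour of $f$, so the algebraic periods $a_k(f)$ are determined by the periodic expansion \eqref{formula:k-periodic expansion} of $\tr H_1(f^k)$ via the inversion \eqref{equa:dual sequences}.

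For case (1), the basic building block for each $n \ge 2$ is an orientable surface $\Sigma_n$ of genus $n$ carrying an orientation-preserving periodic homeomorphism $\phi_n$ of order $n$, designed so that the characteristic polynomial of $H_1(\phi_n; \QQ)$ is a power of the $n$-th cyclotomic polynomial. A convenient model realises $(\Sigma_n, \phi_n)$ as a cyclic $n$-fold branched cover of the torus with $\phi_n$ as the deck transformation; Riemann--Hurwitz with full ramification at two base points gives genus exactly $n$. A direct computation then produces $a_n(\phi_n) \neq 0$ and $a_m(\phi_n) = 0$ for $1 < m < n$. I assemble $f$ on $S_\g$ by excising a $\phi_n$-invariant disc from each block and pasting every $\Sigma_n$ to a central base surface, which is a sphere when $1 \in \cA$ and a genus-$1$ surface carrying a suitable involution when $1 \notin \cA$; the extra genus in the second case is used precisely to cancel the otherwise surviving $a_1(f)$. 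Additivity of Euler characteristics reproduces the stated genus formula.

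For case (2), the \emph{only if} direction is immediate from Theorem \ref{prop:lefschetz_numbers_of_odd_iterations_are_zero_for_reversing_orientation}: an orientation-reversing $f$ has $L(f^k) = 0$ for all odd $k$, and Möbius inversion forces $a_m(f) = 0$ for every odd $m$. For the realisation I split $\cA$ by the residue of $n$ modulo $4$: when $n = 2m$ with $m$ odd I use a single genus-$n$ block carrying an orientation-reversing rotation of order $n$ whose square is the case-(1) block $(\Sigma_m, \phi_m)$ (lifted appropriately), which directly produces $a_n \neq 0$; when $4 \mid n$ no such model exists at genus $n$, so instead I take two copies of $(\Sigma_n, \phi_n)$ joined by an orientation-reversing swap intertwining $\phi_n$ with itself, yielding an order-$n$ orientation-reversing symmetry on a genus-$2n$ surface. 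Case (3) follows the same template as case (1) with non-orientable blocks: for each $n \in \cA \setminus \{1\}$ I use a non-orientable $\phi_n$-invariant surface obtained from $(\Sigma_n, \phi_n)$ by equivariant cross-capping, and the base is $\RR P^2$ when $\cA = \{1\}$, absorbed into a block when $1 \in \cA \neq \{1\}$, and a Klein bottle when $1 \notin \cA$, matching the three genus formulas.

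The main obstacle is not the existence of individual building blocks but the control of their interactions: the coefficients $b_m$ in the expansion $\tr H_1(f^k) = \sum_m b_m \reg_m(k)$ must be nonzero at exactly the indices $m \in \cA$, and the base surface must be tuned so that $a_1(f)$ vanishes precisely when required. Block-additivity of the trace reduces this to a cyclotomic-polynomial bookkeeping on $H_1$ of each piece, which has to be verified case by case and is where the bulk of the technical calculation will live; verifying that the gluings do not introduce spurious fixed homology classes that shift $a_1$ or $a_m$ out of the prescribed pattern is the single point requiring the most care.
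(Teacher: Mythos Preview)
Your overall strategy coincides with the paper's: one periodic building block per element of $\cA$, glued along annuli, with $\cAP(f)$ read off from the periodic expansion of $\tr H_1(f^k)$. Your branched-cover block is in fact the \emph{same} surface as the paper's handle-permutation model (a $\ZZ/n$-rotation on $S_n$ with two fixed points and torus quotient), and your doubling-with-swap for $4\mid n$ in case~(2) is likewise the paper's construction $f'=f\circ j$ on $D(S)$. Two of your stated intermediate claims, however, are wrong as written and would derail the calculation if followed literally.

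First, the characteristic polynomial of $H_1(\phi_n)$ on your block is $(x^n-1)^2$, not a power of the cyclotomic polynomial $\Phi_n$. Indeed, $L(\phi_n^l)=2$ for $n\nmid l$ (the two branch points) and $L(\phi_n^n)=\chi(S_n)=2-2n$, so $\tr H_1(\phi_n^l)=2\reg_n(l)$ and every $n$-th root of unity is a double eigenvalue. This is precisely why $a_m(\phi_n)=0$ for $1<m<n$; had the polynomial really been $\Phi_n^k$, the trace would be a multiple of the Ramanujan sum $c_n(l)=\sum_{d\mid n}\mu(n/d)\reg_d(l)$, producing nonzero $a_d$ at every divisor $d\mid n$ with $\mu(n/d)\neq 0$ (for instance $a_2,a_3\neq 0$ when $n=6$), and the construction would collapse.

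Second, when $1\notin\cA$ an involution on the extra genus-$1$ piece cannot cancel $a_1$. The possible order-$2$ actions on $H_1(T^2)$ contribute $-2\reg_1+2\reg_2$ or $\reg_2$ to the trace, neither of which moves $a_1$ from $2$ to $0$ (the first makes it $4$). What is needed is the \emph{identity} on the genus-$1$ block, contributing $2\reg_1$ to the trace; this is exactly what the paper does by passing from $\cA$ to $\cA'=\cA\cup\{1\}$ and taking $f_1=\mathrm{id}$ on $\Sigma_1$. With these two corrections your plan goes through and is the paper's argument.
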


\begin{proof}
First, let us describe the following construction. Having a finite set $\cA' \subset \NN$ and an assignment $\tau \colon \cA' \to \NN$, for each $n \in \cA'$ consider a surface $\Sigma_n$ which is an orientable surface $S_{\tau(n),2}$ in the orientable case or a non-orientable surface $N_{\tau(n),2}$ in the non-orientable case. On each $\Sigma_n$, we take a periodic homeomorphism $f_n$ of order~$\tau(n)$ which cyclically permutes its $1$-handles and is a rotation on each boundary component. 
More specifically, for an illustration, start with a cylinder $S^1 \times [-1,1]$ with a rotation by $1/\tau(n)$ of a full angle on each circle $S^1 \times \{t\}$, and remove $\tau(n)$ open disjoint discs $D_1,\ldots,D_{\tau(n)}$ in such a way that the rotation cyclically permutes the discs. Now, glue $\tau(n)$ copies of $S_{1,1}$ (or $N_{1,1}$) along permuted boundary components and extend the rotation to a periodic homeomorphism of order $\tau(n)$ that cyclically permutes the attached handles.

Next, having surfaces $\Sigma_{n_1},\ldots,\Sigma_{n_k}$, let us join $\Sigma_{n_i}$ and $ \Sigma_{n_{i+1}}$ for $1 \leq i \leq k-1$  by identifying their boundary components with end circles of a cylinder $C_i = S^1 \times [-1,1]$. By the construction, we get a surface $S$ with two boundary components and define $f$ to be equal to $f_{n_i}$ on $\Sigma_{n_i}$, and on each joining cylinder it is given by a homotopy between two rotations on end circles (see Figure~\ref{figure:periodic_homeo_preserv}).

\begin{figure}[htp]
	\centering

	\begin{tikzpicture}[scale=1]
		\node[rotate=0, scale=1] at (0,0) {\includegraphics[width=65mm]{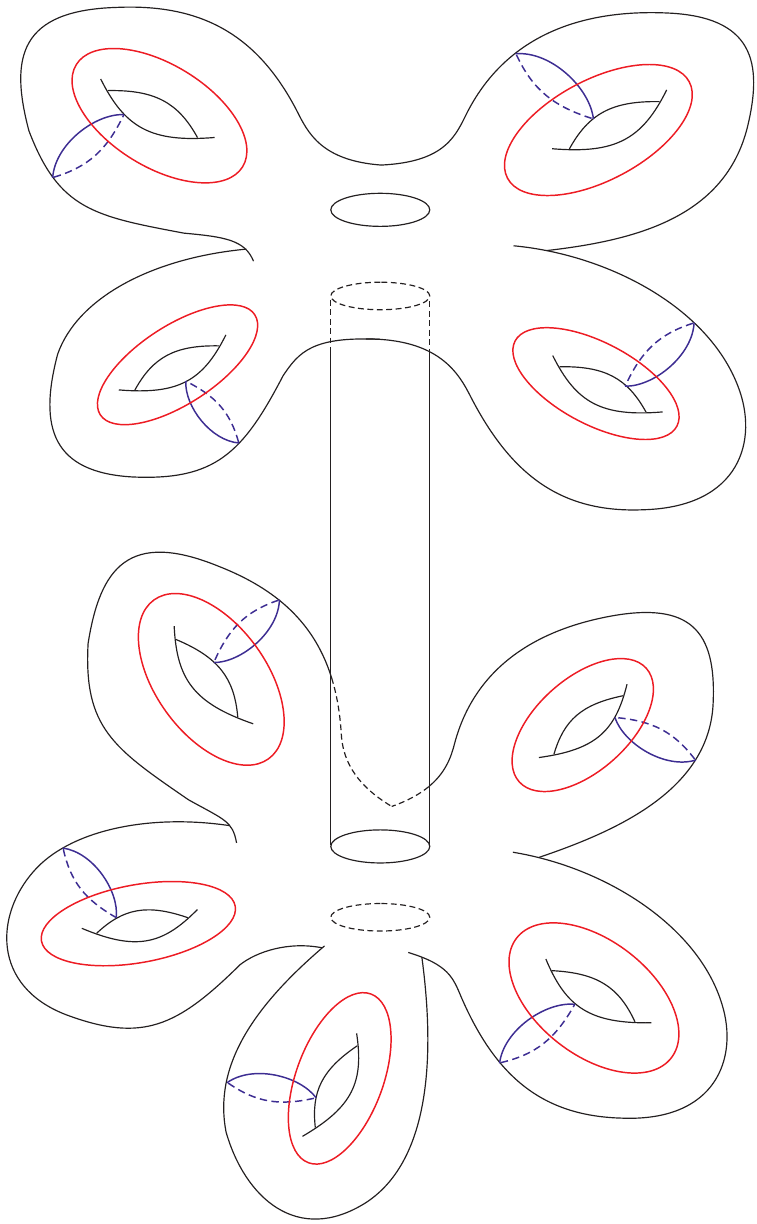}};

		\draw[->] (-2.8,3.4) to[out=205,in=155] (-2.8,2.4);
		\draw (-3.4,2.9) node {\Large $f_4$};

		\draw[->] (-2.8,-0.5) to[out=205,in=155] (-2.9,-1.7);
		\draw (-3.5,-1) node {\Large $f_5$};

		\draw (-0.5,4.9) node {\Large $\Sigma_4$};
		\draw (0.8,-4.7) node {\Large $\Sigma_5$};
		\draw (0.9,0.5) node {\Large $C_i$};

		\draw (-0.85,3.8) node {\large $a^4_1$};
		\draw (-0.85,2.8) node {\large $a^4_2$};
		\draw (1,2.7) node {\large $a^4_3$};
		\draw (0.82,3.6) node {\large $a^4_4$};
		
		\draw (-3.2,3.8) node {\large $b^4_1$};
		\draw (-0.9,1.2) node {\large $b^4_2$};
		\draw (3,2.7) node {\large $b^4_3$};
		\draw (0.88,5.1) node {\large $b^4_4$};

	\end{tikzpicture}

	\caption{Orientation-preserving case.
	}\label{figure:periodic_homeo_preserv}
\end{figure}

(1) In the case of an orientable surface and an orientation-preserving homeomorphism, take $\cA' = \cA \cup \{1\}$ if $1 \notin \cA$ or $\cA' = \cA \setminus \{1\}$ if $1 \in \cA$, with an assignment $\tau(n) = n$ for $n\in \cA'$. The above procedure provides a surface $S$ with an orientation-preserving homeomorphism~$f$ (see Figure \ref{figure:periodic_homeo_preserv}). Let us attach two discs to $S$ to get a closed orientable surface $\widetilde{S}$, and $f$ extends on $\widetilde{S}$, being an appropriate rotation on each of two discs. Taking one circle $S^1 \times \{0\}$ for each cylinder $C_i$, we get a system of simple closed curves preserved by $f$ such that $f$ is periodic on the complement of its open tubular neighborhood $\bigcup {\rm Int} C_i$, so $f$ is of algebraically finite type. Moreover, let $B_n = \{a^n_1,\ldots,a^n_{n},b^n_1,\ldots,b^n_{n}\}$ consist of circles in $S_{n}$ such that $\bigcup_{n\in \cA'} B_n$ forms the standard (symplectic) basis for $\widetilde{S}$. Then, the matrix of $H_1(f)$ on $H_1(\widetilde{S})$ is a block diagonal matrix whose blocks correspond to $B_n$'s, and since $f(a^n_j) = f_n(a^n_j) = a^n_{(j+1) \mod n}$ and similarly $f(b^n_j) = b^n_{(j+1)\mod n}$, the $n$th block is the direct sum of two permutation matrices for a cycle of length $n$:
$$
\begin{bmatrix}
	0 & 0 &\cdots & 0 & 1 \cr
	1 & 0 & \cdots & 0 & 0 \cr
	0 & 1 & \cdots & 0 & 0 \cr
	\vdots & \vdots & \ddots & \vdots & \vdots \cr
	0 & 0 & \cdots & 1 & 0
\end{bmatrix}.
$$
Therefore, the characteristic polynomial of $H_1(f)$ is equal to
$$
\prod_{n \in \cA'} (x^{n} - 1 )^2,
$$
and so it is not difficult to observe by definition of $\reg_k$ that $\tr(H_1(f^l)) = \sum_{n \in \cA'} \linebreak 2\reg_{n}(l)$. By the definition of $\cA'$,
\begin{equation}\label{L=regi}	
	L(f^l) = 2 - \tr(H_1(f^l)) = \sum_{n \in \cA} a_n(f) \, \reg_n(l),
\end{equation}
where $a_n(f) = -2$ for $n \in \cA \setminus \{1\}$, $a_1(f) = 2$ if $1\in \cA$, and $a_n(f) =0$ for $n\notin \cA$. Thus, $\cAP(f) = \cA$. 

(2) For the orientation-reversing case, we will show later that there are no odd algebraic periods
(see Theorem \ref{prop:lefschetz_numbers_of_odd_iterations_are_zero_for_reversing_orientation}). Assume that $\cA \subset 2\NN$ and take $\cA' = \cA \cup \{2\}$ if $2 \notin \cA$ or $\cA' = \cA \setminus \{2\}$ if $2 \in \cA$ with an assignment $\tau(n) = n$ if $4$ divides $n$ and $\tau(n) = n/2$ otherwise. Again, the procedure provides an orientable surface $S$, and we attach a disc to one of its two boundary components (also naming the resulting surface  $S$). We take its double $D(S)$, i.e. consider a copy $S'$ of $S$ consisting of surfaces $\Sigma'_{n}$ and cylinders $C'_i$, and join $S$ and $S'$ by a new cylinder $C = S^1 \times [-1,1]$ along their boundary components. We have defined $f$ on $S$ and $S'$, and on the new cylinder, let $f_{|S^1 \times \{t\}}$ be the same as $f_{|{S^1 \times \{\pm 1\}}}$.

Now, let $j \colon D(S) \to D(S)$ be  orientation-reversing involution mapping points of $S$ into corresponding points of $S'$, e.g., it can be seen as the reflection through the plane intersecting $D(S)$ in $S^1 \times \{0\} \subset C$ and separating symmetrically $S$ and $S'$, see Figure \ref{figure:periodic_homeo_revers}. The desired self-homeomorphism of $D(S)$ is $f' = f \circ j$. Indeed, cutting $D(S)$ along circles from connecting cylinders, we get the decomposition into surfaces $\Sigma_{n}$ and $\Sigma'_{n}$ such that $f'$ maps $\Sigma_{n}$ homeomorphically onto $\Sigma'_{n}$ and conversely. Thus, $(f')^2$ is a self homeomorphism of each piece $\Sigma_{n}$ or $\Sigma'_{n}$ on which one can check it is $n/2$-periodic. Hence, $f'$ is of algebraically finite type.

\begin{figure}[htp]
	\centering	
	
	\begin{tikzpicture}[scale=1.1]

		\node[rotate=0, scale=1] at (7.8,0) {\includegraphics[width=65mm]{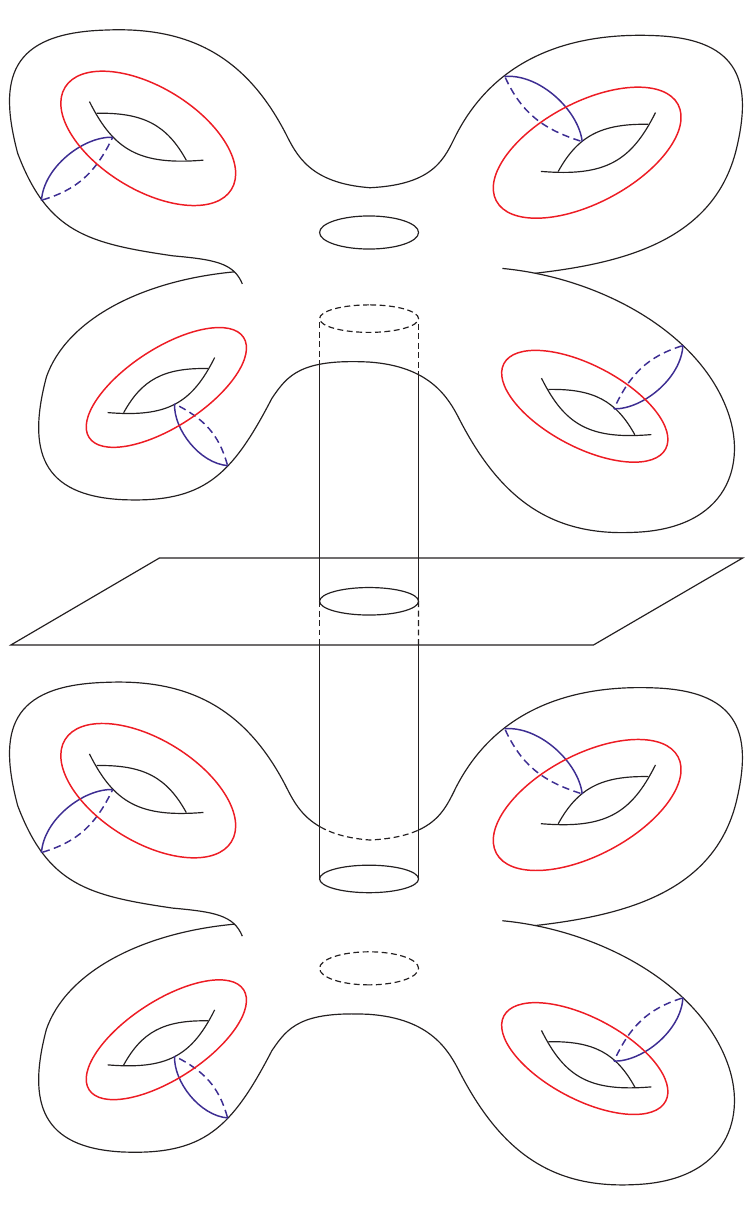}};

		\draw[->] (4.8,3.4) to[out=205,in=155] (4.8,2.4);
		\draw (4.2,2.9) node {\Large $f_4$};
		
		\draw[->] (4.8,-2.4) to[out=205,in=155] (4.8,-3.4);
		\draw (4.2,-2.9) node {\Large $f_4$};

		\draw (7.2,4.7) node {\Large $\Sigma_4$};
		
		\draw (8.4,-4.7) node {\Large $\Sigma'_4$};
		
		\draw[<->,thick] (8.8,-0.7) to (8.8,0.7);
		\draw (9.1,0.1) node {\Large $j$};

	\end{tikzpicture}

	\caption{Orientation-reversing case.
	}\label{figure:periodic_homeo_revers}
\end{figure}

Moreover, it is clear that $(f')^2(a^n_j) = a^n_{(j+2) \mod \tau(n)}$, $(f')^2(b^n_j) = b^n_{(j+2)\mod \tau(n)}$ on $\Sigma_{n}$ and similarly for $\Sigma'_{n}$. If $4$ divides $n$, then $\tau(n)=n$, and so $(f')^2_{|\Sigma_n}$ has order $n/2$, thus the $n$th block of the matrix of $H_1(f')$ is the direct sum of four permutation matrices for a cycle of length~$n$. However, if $n$ is not divisible by $4$, then $\tau(n) = n/2$ is odd and $(f')^2_{|\Sigma_n}$ still has order $\tau(n)=n/2$. Therefore, the characteristic polynomial of $f'$ is equal to
$$
\prod_{4 \,|\, n \in \cA'} (x^n-1)^4 \prod_{4\, \nmid\, n \in \cA'} (x^n -1)^2
$$
and so
$$
L((f')^l) =  2\reg_2(l) - \tr(H_1((f')^l)) = \sum_{n \in \cA} a_n(f')\, \reg_n(l),
$$
where $a_n(f') = -4$ for $n \in \cA$ such that $4 \,|\,n$, $a_n(f') = -2$ for $n \in \cA \setminus \{2\}$ such that $4 \,\nmid\,n$ and  $a_2(f') = 2$ if $2\in \cA$. Obviously, $a_n(f') = 0$ for $n \notin \cA$, so $\cAP(f') = \cA$.

(3) Analogously, we construct a desired self-homeomorphism of a non-orientable surface. If $\cA = \{1\}$, then just take ${\rm id}_{\RR\!P^2}$. Otherwise, consider $\cA' = \cA \cup \{1\}$ if $1 \notin \cA$ or $\cA' = \cA \setminus \{1\}$ if $1 \in \cA$ with an assignment $\tau(n)=n$ for $n\neq 1$ and $\tau(1) =2$ if $1\in \cA'$. Moreover, in the preliminary construction for the non-orientable case, if $1\in \cA'$, we take $f_1 = {\rm id}$ on $\Sigma_1$ which is of genus $\tau(1)=2$. We get a non-orientable surface $S$ with two boundary components to which we attach discs as previously. Similarly as in (1), we have a self-homeomorphism $f$ of $S$ of algebraically finite type, which is $n$-periodic on each $\Sigma_n$, except $n=1$.

Let $B_n = \{a^n_1,\ldots,a^n_{\tau(n)}\}$ be a set of circles in $\Sigma_n$ which are cyclically permuted by $f$ and such that elements of $\bigcup B_n$ represent standard generators of $H_1(S) \cong \ZZ^{g-1} \oplus \ZZ_{/2}$ with the relation $2\sum_{i,n}a^n_i = 0$, where $\g$ is the genus of $S$. 
The trace of $H_1(f)$ is computed as the trace of $\widetilde{H_1(f)}$ on $H_1(S)/{\rm T} \cong \ZZ^{\g-1}$, which is induced by $H_1(f)$ after dividing by the torsion part ${\rm T}$ of $H_1(S)$. Thus, $H_1(S)/{\rm T} \cong \left( \bigoplus_{i,n} \ZZ a^n_i \right)/ \left< \sum_{i,n} a^n_i \right>$ has a basis $\left(\bigcup B_n\right) \setminus \{a\}$ for some fixed $a\in B_{n_0}$, and $a$ is represented as the vector $(-1,\ldots,-1)$ in this basis. Hence, the matrix of $\widetilde{H_1(f)}$ is a block matrix if $n_0 =1 \in \cA'$, or almost a block matrix if $n_0 \neq 1$, i.e. the structure of a block matrix is disturbed only in the column which corresponds to a basis element $a^{n_0}_i$ such that $f(a^{n_0}_i) = a$  (if we order the basis in such a way that $a^{n_0}_i$ is the last element, then the matrix is upper block triangular). However, in the second case, the characteristic polynomial of $\widetilde{H_1(f)}$ is still the product of characteristic polynomials of permutation matrices for cycles of length $n$ for $n\neq n_0$, the identity matrix of dimension $2$ if $1\in \cA'$, and the characteristic polynomial of \smallskip
\begin{equation}\label{almost}
	\begin{bmatrix}
		0 & 0 &\cdots & 0 & -1 \cr
		1 & 0 & \cdots & 0 & -1 \cr
		0 & 1 & \cdots & 0 & -1 \cr
		\vdots & \vdots & \ddots & \vdots & \vdots \cr
		0 & 0 & \cdots & 1 & -1
	\end{bmatrix},
\end{equation}
which is the companion matrix of the polynomial $x^{n_0-1}+\ldots x + 1 = (x^{n_0} -1 ) / (x-1)$.
Since the trace of the companion matrix of a polynomial is equal to the sum of its roots, by  formula (\ref{formula:definition_of_reg_k}), the trace of the $l$-power of the matrix (\ref{almost}) is equal to $\reg_{n_0}(l) - \reg_1(l)$. Thus, in both the cases we get
$$
\tr({H_1(f^l)}) = \sum_{n \in \cA'\setminus \{1\}} \reg_{n}(l) + \sum_{1 \in\cA'}2\reg_1(l) - \reg_1(l),
$$
so
$$
L(f^l) = 1 - \tr({H_1(f^l)}) = \sum_{n \in \cA} a_n(f)\, \reg_n(l),
$$
where $a_n(f) = -1$ for $n \in \cA \setminus \{1\}$, $a_1(f) = 2$ if $1\in \cA$,  and $a_n(f) =0$ for $n\notin \cA$. Hence, $\cAP(f) = \cA$.
\end{proof}

To complete the proof, we are left with showing that  $\cAP(f) \cap (2\NN-1) = \emptyset$ if $f$ is orientation-reversing because we have shown only the realization for $\cAP(f) \subset 2\NN$.

In order to do this, we will need the notion of antisymplectic maps related to the maps induced on homology.

Each orientation-preserving homeomorphism $h\colon S_\g \to S_\g$ induces the linear map $H_1(h)$ that preserves the intersection form on {\small $H_1(S_\g;\mathbb{Z}) \cong \mathbb{Z}^{2\g} = \left<a_1,\ldots,a_\g,b_1,\ldots,b_\g\right>$},
$$
<\cdot,\cdot> \colon \mathbb{Z}^{2\g} \times \mathbb{Z}^{2\g} \to \mathbb{Z}
$$
given by
$$
<a_i,b_j > = \delta_{ij} = - <b_j,a_i>, \ \ \  <a_i,a_j> = 0, \ \ \ <b_i,b_j > = 0,
$$
whose matrix is
$$ \Omega = \begin{bmatrix}
0 & I_\g \cr
-I_\g & 0
\end{bmatrix}.
$$
Thus, if $A$ is the matrix of $H_1(h)$ in the basis $\left<a_1,\ldots,a_\g,b_1,\ldots,b_\g\right>$, then
\begin{equation}\label{symplectic_matrix}
\Omega = A^T \Omega A,
\end{equation}
so $A$ is a symplectic matrix.

Suppose $h\colon S_\g \to S_\g$ is an orientation-reversing homeomorphism. Thus, it induces $-\rm{id}$ on $H_2(S_\g) = \ZZ$, and so the matrix $A$ of $H_1(h)$ in the standard basis satisfies
\begin{equation}\label{anti}
A^T \Omega A = -\Omega.
\end{equation}
Such a matrix  is called {\it antisymplectic}.

In particular, one can show that $\det(A) = (-1)^\g$. The matrix $M = I_\g \oplus (-I_\g)$ is clearly antisymplectic, and $\det(M)=(-1)^\g$. If $A$ is antisymplectic, then $AM$ is symplectic, and so $\det(AM) = 1$. Therefore, $\det(A) = (\det(M))^{-1} = (-1)^\g$.

It is not difficult to verify that for a square matrix $A$ of dimension $n$, its characteristic polynomial $\chi_A(x)$ satisfies
$$
\chi_{A^{-1}}(x) = \frac{1}{\det(-A)}x^{n}\chi_A\left(\frac{1}{x}\right) \text{ \ \ and \ \ }\chi_{-A}(x) = (-1)^n \chi_A(-x).
$$

Therefore, if $A$ is antisymplectic of dimension $2\g$, then $A^T\Omega = -\Omega A^{-1}$, and so
\begin{equation}
\begin{aligned} \label{formula:characteristic_pol_antisymplectic}
	\chi_A(x) &= \det(xI -A) = \det(xI - A^T)\det(\Omega) = \det(x\Omega - A^T\Omega) \\
	&= \det(x\Omega + \Omega A^{-1}) = \det(xI - (-A^{-1})) = \chi_{-A^{-1}}(x)
	\\
	&= (-1)^{2\g}  \frac{1}{\det(-A)}(-x)^{2\g}\chi_A\left(-\frac{1}{x}\right) = (-1)^\g x^{2\g}\chi_A\left(-\frac{1}{x}\right).
\end{aligned}
\end{equation}

\begin{theorem}[cf. \cite{Bla-Fra}]
\label{prop:lefschetz_numbers_of_odd_iterations_are_zero_for_reversing_orientation}
Let $h\colon S_\g \to S_\g$ be an orientation-reversing quasi-unipotent homeomorphism (e.g. Morse--Smale diffeomorphism). Then, $L(h^{odd}) = 0$, and so $\cAP(h) \subset 2\NN$, i.e. $h$ has no odd algebraic periods.
\end{theorem}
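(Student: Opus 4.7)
The plan is to use the formula for the Lefschetz number and to exploit the symmetry of the characteristic polynomial of $H_1(h)$ forced by antisymplecticity, combined with the quasi-unipotence assumption.

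First, I would decompose $L(h^n) = 1 - \tr H_1(h^n) + \tr H_2(h^n)$. Since $h$ is orientation-reversing, $H_2(h)$ acts on $H_2(S_\g;\ZZ)\cong \ZZ$ as multiplication by $-1$, so $\tr H_2(h^n)=(-1)^n$. For odd $n$ this gives $L(h^n)=-\tr H_1(h^n)$, so the theorem reduces to showing
\[
\tr A^n = 0 \qquad \text{for all odd } n,
\]
where $A$ denotes the matrix of $H_1(h)$ in the standard symplectic basis.

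The key point is that $A$ being antisymplectic (equation (\ref{anti})) gives, by formula (\ref{formula:characteristic_pol_antisymplectic}), the functional equation
\[
\chi_A(x) = (-1)^\g x^{2\g} \chi_A(-1/x).
\]
Reading off the roots, this means that the multiset of eigenvalues of $A$ is invariant under the involution $\lambda \mapsto -1/\lambda$. Since $A$ has integer entries, its eigenvalues are also closed under complex conjugation $\lambda \mapsto \bar\lambda$. The quasi-unipotence hypothesis places every eigenvalue on the unit circle, so $\bar\lambda = 1/\lambda$, and composing the two symmetries shows that the eigenvalues of $A$ are closed (with multiplicity) under $\lambda \mapsto -\lambda$. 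Hence they can be grouped into pairs $\{\lambda,-\lambda\}$ (with any $\lambda=0$ impossible since $A$ is invertible, and pairs collapsing only for $\lambda=\pm 1$ or $\lambda=\pm i$, which are themselves negatives of each other).

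Finally, for odd $n$ each such pair contributes $\lambda^n + (-\lambda)^n = 0$ to $\tr A^n = \sum_i \lambda_i^n$, so the whole sum vanishes. Combining with the reduction above yields $L(h^n)=0$ for every odd $n$, and since $a_n(h)$ is a $\QQ$-linear combination of $L(h^k)$ for $k\mid n$ (all odd when $n$ is odd), we get $a_n(h)=0$ for odd $n$, i.e. $\cAP(h)\subset 2\NN$. The main conceptual step is the pairing $\lambda\leftrightarrow-\lambda$, which requires both the antisymplectic structure (from orientation reversal) and the unit-circle location of the spectrum (from quasi-unipotence); neither hypothesis alone suffices.
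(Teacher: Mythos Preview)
Your proof is correct and shares the same opening moves as the paper: reduce to showing $\tr H_1(h)^n = 0$ for odd $n$, and use the antisymplectic functional equation (\ref{formula:characteristic_pol_antisymplectic}) to see that the spectrum of $A=H_1(h)$ is closed (with multiplicity) under $\lambda \mapsto -1/\lambda$. The final step, however, is genuinely different. The paper factors $\chi_A$ into cyclotomic polynomials, observes that $\varphi_l$ and $\varphi_{2l}$ (for odd $l$) occur with equal multiplicity, and then uses that the sum of the roots of $\varphi_m$ equals $\mu(m)$ together with $\mu(l)+\mu(2l)=0$ and $\mu(4m)=0$ to kill the trace. You instead compose the $\lambda\mapsto -1/\lambda$ symmetry with complex conjugation (which, under quasi-unipotence, is $\lambda\mapsto 1/\lambda$) to obtain the symmetry $\lambda\mapsto -\lambda$ of the eigenvalue multiset, whence $\tr A^n = (-1)^n\tr A^n$ and the odd traces vanish immediately. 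Your route is shorter and sidesteps the cyclotomic bookkeeping; the paper's route makes the cyclotomic structure explicit, which ties in with its use of the functions $\reg_k$ elsewhere. One minor quibble: your parenthetical about pairs ``collapsing'' at $\pm 1$ or $\pm i$ is off---the involution $\lambda\mapsto -\lambda$ has no nonzero fixed points---but this is harmless, since all you need is invariance of the multiset.
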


\begin{proof}
For $n$ odd, $h^n$ is also orientation-reversing, so $L(h^n) = \tr(H_1(h^n))$ and $H_1(h^n)$ is antisymplectic. Therefore, it suffices to show that $L(h)=0$ just for $h$.

The characteristic polynomial $p(x)$ of $H_1(h)$ satisfies (\ref{formula:characteristic_pol_antisymplectic}), so if $p(x) = (x- \xi)^k q(x)$, then 
$$
p(x) = (x- \xi)^k q(x) = (-1)^g x^{2g}\left(-\frac{1}{x} - \xi\right)^k q\left(-\frac{1}{x}\right).
$$
Thus, if $\xi$ is a root of $p(x)$ of multiplicity $k$, then $p(x)$ has a root $-1/\xi$ of multiplicity $k$. In particular, if $\xi$ is a primitive root of unity of odd order $l$, then $-1/\xi$ is a primitive root of unity of order $2l$, and conversely, if $\xi$ is primitive of order $2l$, $l$ odd, then $\xi^l = -1$, so $-1/\xi$ is primitive of order $l$.

Let $\varphi_l(x)$ be the $l$th cyclotomic polynomial. Then, $p(x)$ has a factor $\varphi_l(x)$, $l$ odd, with multiplicity $k$ if and only if it has a factor $\varphi_{2l}(x)$ with multiplicity $k$. Since the sum of the roots of $\varphi_m(x)$ is equal to $\mu(m)$, $L(h)$ is a sum of the summands: $\mu(l)+\mu(2l)= \mu(l) - \mu(l) = 0$ for each odd $l$ such that $\varphi_l(x)$ is a factor of $p(x)$, and $\mu(4m) = 0$ for each factor $\varphi_{4m}(x)$ of $p(x)$. Thus, $L(h) = 0$.

By the M{\"o}bius inversion formula,
$$
a_{n}(h) = \sum_{d | n} \mu(n/d) L(h^d) = 0
$$
for odd $n$ since all $d | n$ are also odd.
\end{proof}

\begin{proof}[Proof of Theorem \ref{thm:main1}] The statement of Theorem \ref{thm:main1} follows from Theorems \ref{thm:da Rocha},  \ref{thm:realization by algebraically finite}, and \ref{prop:lefschetz_numbers_of_odd_iterations_are_zero_for_reversing_orientation}. \end{proof}

\begin{remark}
Note that all the statements related to the genus $\g$ in Theorem \ref{thm:realization by algebraically finite} remain valid for Morse--Smale diffeomorphisms in Theorem \ref{thm:main1}.
\end{remark}

\begin{remark}
\label{LLibre-bledy}
In fact, Llibre and coauthors considered the orientation-reversing case in \cite{LlSi1} (respectively, orientation-preserving case  in papers cited in \cite{LlSi3}), and they did not take into account the topological restrictions that come from the structure of the cohomological ring which are encoded in  condition (\ref{anti}) (or respectively in (\ref{symplectic_matrix})), which resulted in these papers in listing  also  non-realizable algebraic periods. In particular,  as we showed above, these restrictions  force that there are no odd numbers in the set of algebraic periods for orientation-reversing homeomorphisms.
\end{remark}

\subsection{Existence of periodic points for Morse--Smale diffeormophisms and transversal maps}\label{trans}

To formulate consequences of our main result related to the existence of periodic points, we recall the notion of a transversal map.

\begin{definition}[\cite{Dold}, \cite{Franks}]\label{defi:transversal}
Let $f\colon  \cU \to M$
be a~$C^1$-map of an open subset $\cU$ of a
manifold $M$. 
We say that
$f$ is transversal if for
any $m \in {\NN}$ and $x \in P^m(f)$,
$
1 \notin  \sigma(Df^m (x))$, where $\sigma$ denotes the spectrum of the derivative $D$ of $f^m(x)$.

\end{definition}

The set of all transversal maps $\cU \to M$ is denoted by $C_T(\cU,M)$ or $C_T(M)$ if $\cU=M$. The~main property of the class of transversal maps is given in the following theorem 
(see \cite{Dold} if $M= \RR^d$, and \cite{Seidel} for the general case, also \cite{Jez-Mar} for an exposition).

\begin{theorem}\label{thm:Seidel}
The set $C^\infty_T(\cU, M)$ is generic in $C^0({\cU}, M)$,
i.e. it  is the intersection $
C^\infty_T(\cU, M) =\bigcap_{n=1}^{\infty} G_n$ where
$G_n$ is open and  dense in $C^0({\cU}, M)$ .
In particular, every map $ f\colon \cU\to M$ is homotopic to a~transversal map $h\colon  \cU\to M$.
\end{theorem}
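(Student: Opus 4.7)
The plan is to establish the transversality condition iterate by iterate as a generic property and then combine the countably many resulting conditions by Baire category. For each $n \geq 1$, I would take $G_n$ to consist of those $f \in C^0(\cU, M)$ that can be $C^0$-approximated by smooth maps whose graph $\Gamma_{f^n}$ is transverse to the diagonal $\Delta \subset M \times M$. A direct calculation shows that $\Gamma_{f^n} \pitchfork \Delta$ at $x \in \Fix(f^n)$ precisely when $Df^n(x) - I$ is invertible, i.e.\ $1 \notin \sigma(Df^n(x))$, so $\bigcap_{n} G_n = C^\infty_T(\cU, M)$ and the theorem reduces to showing that each $G_n$ is open and dense in $C^0(\cU, M)$.

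I would first work inside $C^\infty(\cU, M)$ equipped with the strong $C^1$ (Whitney) topology and show that the set of $f$ with $\Gamma_{f^n} \pitchfork \Delta$ is open and dense there. \emph{Openness}: transversality forces $\Fix(f^n)$ to be discrete, and on any compact subset of $\cU$ the finitely many $n$-periodic points, together with the values of $Df^n$ at them, depend continuously on $f$ in the $C^1$ topology by the implicit function theorem, so the invertibility condition persists under $C^1$-small perturbations. \emph{Density}: apply a parametric transversality argument to the evaluation map
\[
\Phi_n \colon C^\infty(\cU, M) \times \cU \longrightarrow M \times M, \qquad (f, x) \longmapsto (x, f^n(x)),
\]
noting that at any $(f_0, x_0)$ with pairwise distinct orbit points $x_0, f_0(x_0), \ldots, f_0^{n-1}(x_0)$, the map $\Phi_n$ is a submersion in the $f$-direction since one can perturb $f$ independently in disjoint coordinate charts around each orbit point. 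Thom's transversality theorem for Banach manifolds of mappings then yields $\Gamma_{f^n} \pitchfork \Delta$ for a residual, and in particular dense, set of $f$.

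To conclude, Whitney's approximation theorem gives that $C^\infty(\cU, M)$ is $C^0$-dense in $C^0(\cU, M)$, so each $G_n$ is $C^0$-dense; the mild openness assertion in $C^0$ is handled by taking the $C^0$-interior of the pullback of the open $C^1$-neighborhood of a smooth transversal representative. The Baire category theorem in the Polish space $C^0(\cU, M)$ then yields that $C^\infty_T(\cU, M) = \bigcap_n G_n$ is $C^0$-dense. For the final clause, fix an embedding $M \hookrightarrow \RR^N$ with a tubular neighborhood retraction $r$; given $f \in C^0(\cU, M)$, pick $h \in C^\infty_T(\cU, M)$ so $C^0$-close to $f$ that the segment $(1-t)f + th$ stays inside the tube, and the homotopy $t \mapsto r\bigl((1-t)f + th\bigr)$ joins $f$ to $h$.

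The main obstacle is the density step, specifically when orbits are not disjoint. A perturbation of $f$ at a point $y$ perturbs $f^n$ along every orbit passing through $y$, so the naive parametric transversality argument breaks down when the orbit revisits a small chart. The remedy, carried out in \cite{Seidel} (extending Dold's original treatment for $M = \RR^d$), is an inductive construction: first achieve transversality at period $1$, then adjust $f$ by a perturbation small enough to preserve period-$1$ transversality while creating period-$2$ transversality, and so on. Because at each stage the required condition is finite-codimensional and the preceding ones are open, the scheme terminates at each $n$ and, passed through Baire, delivers a single $f$ transversal at every period.
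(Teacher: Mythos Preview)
The paper does not give its own proof of this theorem: it is quoted as a known result, with references to \cite{Dold} for $M=\RR^d$, to \cite{Seidel} for the general manifold case, and to \cite{Jez-Mar} for an exposition. Your outline follows exactly the standard route of those references---reduce the period-$n$ condition to $\Gamma_{f^n}\pitchfork\Delta$, obtain density by a parametric (Thom) transversality argument, treat the non-injective-orbit case by an induction on the period, combine periods via Baire, and deduce the homotopy clause from a tubular-neighbourhood retraction. In that sense there is nothing to compare: you are reconstructing the cited proof rather than offering an alternative.

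That said, there is a genuine gap in how you set up the sets $G_n$. As you define it, $G_n$ is the $C^0$-\emph{closure} of the smooth period-$n$ transversal maps, which is closed rather than open; your later remark about ``taking the $C^0$-interior of the pullback of the open $C^1$-neighbourhood'' does not repair this, because a $C^1$-open set has empty $C^0$-interior in general. More seriously, even with a corrected $G_n$ the equality $\bigcap_n G_n = C^\infty_T(\cU,M)$ fails: a map lying in every $G_n$ is, for each $n$ \emph{separately}, a $C^0$-limit of smooth period-$n$ transversal maps, which forces neither smoothness nor simultaneous transversality at all periods. (Indeed, smooth maps form a meager subset of $C^0$, so no set of smooth maps can be a dense $G_\delta$ in $C^0$; the theorem statement itself is informal on this point.) The clean formulation---and the one actually carried out in \cite{Seidel}---is to work in the $C^r$ topology, $r\geq 1$, where the period-$n$ condition \emph{is} open, take the countable intersection there, and invoke the $C^0$-density of smooth maps only at the very end for the homotopy statement. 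Your inductive density argument and the final homotopy are fine; it is only the $C^0$/$C^1$ bookkeeping that needs to be rewritten.
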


A geometric property of a~transversal map  is as follows.
\begin{proposition}\label{prop:periodic isolated for transv}
Let $M$ be a closed manifold.
For any $f\in C_T(M)$ and every $m\in\NN$, the set $P^m(f)$
consists of isolated points.
\end{proposition}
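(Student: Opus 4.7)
The plan is to reduce the statement to a direct application of the inverse function theorem in local coordinates. Fix $m\in\NN$ and a point $x\in P^m(f)$, so $f^m(x)=x$. The defining condition of transversality guarantees that $1\notin\sigma(Df^m(x))$, which is precisely the statement that the linear endomorphism $Df^m(x)-\mathrm{id}_{T_xM}\colon T_xM\to T_xM$ is invertible.

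Next, I would choose a chart $(U,\varphi)$ around $x$ with $\varphi(x)=0$ such that $f^m(U)\subset U'$ for some chart $(U',\psi)$ around $x$ (possible by continuity of $f^m$ and shrinking $U$ if necessary); since $f^m(x)=x$, one can in fact arrange $\varphi=\psi$. In these coordinates, set
\begin{equation*}
g = \varphi\circ f^m\circ\varphi^{-1} - \mathrm{id},
\end{equation*}
viewed as a $C^1$ map from an open neighbourhood of $0\in\RR^d$ (where $d=\dim M$) into $\RR^d$. Then $g(0)=0$ and $Dg(0) = D(\varphi\circ f^m\circ\varphi^{-1})(0)-I$ is conjugate to $Df^m(x)-\mathrm{id}_{T_xM}$, hence invertible.

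By the inverse function theorem, $g$ is a $C^1$ diffeomorphism from some open neighbourhood $V$ of $0$ onto an open neighbourhood of $0$ in $\RR^d$. In particular, $g$ is injective on $V$, so $g^{-1}(0)\cap V=\{0\}$. Pulling back through $\varphi$, this exactly says that $x$ is the unique fixed point of $f^m$ in the neighbourhood $\varphi^{-1}(V)\subset U$ of $x$. Therefore $x$ is isolated in $P^m(f)$. Since $x\in P^m(f)$ was arbitrary, every point of $P^m(f)$ is isolated, which is the claim.

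There is no real obstacle here; the only thing to be mildly careful about is the chart bookkeeping when writing $f^m$ in local coordinates, and to note that the compactness of $M$ (which is hypothesised in the statement but not actually needed for isolation) upgrades the conclusion to finiteness of $P^m(f)$, since a compact set of isolated points must be finite.
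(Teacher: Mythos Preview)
Your argument is correct and is the standard one: the transversality hypothesis $1\notin\sigma(Df^m(x))$ says precisely that $Df^m(x)-{\rm id}$ is invertible, and the inverse function theorem applied to $g=\varphi\circ f^m\circ\varphi^{-1}-{\rm id}$ in a chart then forces $x$ to be an isolated zero of $g$, i.e.\ an isolated fixed point of $f^m$. The chart bookkeeping and the remark on finiteness via compactness are also fine.

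There is nothing to compare against in the paper: the proposition is stated there as a known geometric fact without proof. Your write-up supplies exactly the expected justification.
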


It follows easily from the definition that Morse--Smale diffeomorphisms belong to $C_T(M)$ (cf. \cite{Smale}, \cite{Franks}). We denote 
the set of Morse--Smale diffeomorphisms by $DC_{MS}(M)$.

Finally, all mappings in $C_T(M)$, thus in 
$DC_{MS}(M)$, have the following geometric property, which states that non-vanishing of $a_n(f)$, $n$-odd, implies that $P_n(f)\neq \emptyset$, i.e. $ n\in \per(f)$.

\begin{proposition}[see {\cite[Corollary 3.3.10]{Jez-Mar}} for an exposition, cf. \cite{Dold,Franks} for the case of Morse--Smale diffeomorphisms]\label{prop:periodic for transversal} {\phantom{newline}}

Let $f \colon M\to M$  be a transversal map.
\begin{center}
	If $a_n(f) \neq 0\,$ then
	$ \;\begin{cases} P_n(f) \cup  P_{\frac{n}{2}}(f) \neq  \emptyset \;  \, {\text{if}}\; n \; {\text{is even}}, \cr
		P_n(f) \neq  \emptyset \; \; {\text{if}}\; n\; {\text{is odd}}.
	\end{cases}
	$	
\end{center}
\end{proposition}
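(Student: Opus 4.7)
The plan is to reduce the proposition to a local index analysis at each transversal periodic orbit. By Proposition \ref{prop:periodic isolated for transv} the set $P^n(f)$ is finite and discrete, so the Lefschetz--Hopf formula gives
$$
L(f^n) \;=\; \sum_{k\,\mid\,n}\;\sum_{x\in P_k(f)}\ind(f^n, x).
$$
Transversality yields $\ind(f^n, x) = \sgn\det\bigl(I - Df^n(x)\bigr) \in \{\pm 1\}$. Since the matrices $Df^n(f^j(x))$ are cyclic rearrangements of $Df(f^{n-1}(x))\cdots Df(x)$, they share a characteristic polynomial, so the local index is constant along each $f$-orbit. Hence each orbit of minimal period $k$ (with $k \mid n$) contributes exactly $k \cdot \sgn\det(I - Df^n(x))$ to $L(f^n)$.

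The heart of the argument is the sign sequence $m \mapsto \sgn\det\bigl(I - A^m\bigr)$ with $A = Df^k(x)$. Transversality, applied at every iterate $mk$, forces each eigenvalue of $A$ to avoid all roots of unity. An elementary case analysis of the factors $1 - \lambda_i^m$ (complex-conjugate pairs and real $\lambda \in (-1, 1)$ give positive factors, real $\lambda > 1$ produces a fixed negative factor, real $\lambda < -1$ produces a factor whose sign depends on the parity of $m$) shows the sequence depends only on the parity of $m$. Letting $a, b$ count real eigenvalues in $(1, \infty)$ and $(-\infty, -1)$, and setting $\epsilon = (-1)^a$, $\eta = (-1)^b$, I would obtain, after summing over the $k$ points of one orbit,
$$
\sum_{y \,\in\, \mathrm{orbit}(x)}\ind(f^n, y) \;=\; \epsilon\,\reg_k(n) \;+\; \frac{\epsilon(\eta - 1)}{2}\,\reg_{2k}(n).
$$
Thus each orbit of minimal period $k$ contributes only to the $\reg_k$ and $\reg_{2k}$ modes of the periodic expansion of $L(f^{\bullet})$.

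Summing over all orbits and invoking the uniqueness of the periodic expansion $L(f^n) = \sum_d a_d(f)\,\reg_d(n)$, I would identify which orbits can produce a nonzero $a_n(f)$: only orbits of minimal period $n$ (via $\reg_n$) and, if $n$ is even, orbits of minimal period $n/2$ with $\eta = -1$ (via $\reg_{2(n/2)} = \reg_n$). Consequently $a_n(f)\neq 0$ forces $P_n(f)\neq\emptyset$ when $n$ is odd, and $P_n(f)\cup P_{n/2}(f)\neq\emptyset$ when $n$ is even. The main obstacle is the case-by-case analysis for the sign of $\det(I - A^m)$; while elementary, it hinges crucially on transversality excluding every root of unity from $\sigma(Df^k(x))$, which is precisely what collapses the local index sequence to temporal period at most two and prevents additional modes $\reg_{lk}$ with $l\ge 3$ from appearing.
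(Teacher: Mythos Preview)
The paper does not give its own proof of this proposition; it is quoted as a known result with a reference to \cite[Corollary 3.3.10]{Jez-Mar} (and \cite{Dold,Franks} for Morse--Smale diffeomorphisms). Your argument is correct and is essentially the standard one behind those references: the Lefschetz--Hopf formula together with the transversal local index computation shows that each orbit of minimal period $k$ contributes only to the $\reg_k$ and $\reg_{2k}$ coefficients of the periodic expansion, from which the conclusion follows immediately by uniqueness of that expansion (i.e.\ M\"obius inversion). Two small points worth making explicit: the finiteness of $P^n(f)$ uses compactness of $M$ in addition to Proposition~\ref{prop:periodic isolated for transv}; and the fact that no eigenvalue of $A=Df^k(x)$ is a root of unity is exactly the transversality hypothesis applied at every multiple $mk$ of the minimal period, as you note.
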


The above proposition allows us to formulate the analytical (dynamical) consequence of Theorem \ref{thm:main1}. Let $\cAP_{odd}(f) = \cAP(f)\cap (2\NN -1)$, and $\cAP_{even}(f)=\cAP(f)\cap (2\NN)$.

\begin{corollary}\label{coro: main analytical}
Let $\cA \subset \NN$ be a finite subset of natural numbers, and $f\colon S_\g\to S_\g$ be a preserving or reversing orientation Morse--Smale diffeomorphism of orientable surface of genus $\g$, or correspondingly a Morse--Smale diffeomorphism of a non-orientable surface $N_\g$  given by Theorem \ref{thm:main1} such that \mbox{$\cAP(f)=\cA$.}


Moreover, let $h \in C_T(S_\g)$, respectively $h \in C_T(N_\g)$, be homotopic to $f$ (in particular, $h$ can be equal to $f$). Then, $n\in \per(h)$ for every $n \in \cAP_{odd}(f)$ and $(\{n\} \cup  \{\frac{n}{2}\} )\cap \per(h) \neq \emptyset $ for every $n \in \cAP_{even}(f)$.
\end{corollary}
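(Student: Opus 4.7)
The plan is to leverage the homotopy invariance of the algebraic periods together with Proposition \ref{prop:periodic for transversal} applied to $h$ itself.

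First, I would observe that since the Lefschetz number $L(\cdot)$ is a homotopy invariant, and since $h$ is homotopic to $f$, we have $h^n \simeq f^n$ for every $n\in \NN$, hence $L(h^n)=L(f^n)$ for all $n$. Applying the M\"obius inversion formula (\ref{equa:dual sequences}) termwise, this yields $a_n(h)=a_n(f)$ for every $n\in\NN$, and therefore
\begin{equation*}
\cAP(h)=\cAP(f)=\cA,
\qquad
\cAP_{odd}(h)=\cAP_{odd}(f),
\qquad
\cAP_{even}(h)=\cAP_{even}(f).
\end{equation*}
So the hypothesis $\cAP(f)=\cA$ transfers directly to $h$ with no further work.

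Next, since $h\in C_T(S_\g)$ (respectively $h\in C_T(N_\g)$), Proposition \ref{prop:periodic for transversal} applies to $h$. For every $n\in \cAP_{odd}(f)=\cAP_{odd}(h)$, we have $a_n(h)\neq 0$ and $n$ is odd, hence $P_n(h)\neq \emptyset$, i.e. $n\in \per(h)$. For every $n\in \cAP_{even}(f)=\cAP_{even}(h)$, we have $a_n(h)\neq 0$ and $n$ is even, hence $P_n(h)\cup P_{n/2}(h)\neq \emptyset$, which is precisely the statement that $(\{n\}\cup\{n/2\})\cap \per(h)\neq \emptyset$.

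There is really no obstacle here beyond invoking the two ingredients in the right order; the point of the corollary is precisely to record that the realization theorem, once combined with the transversality criterion of Proposition \ref{prop:periodic for transversal}, produces genuine minimal periods (not merely non-vanishing coefficients) for every map in the homotopy class of the constructed $f$. The case $h=f$ is subsumed because Morse--Smale diffeomorphisms lie in $DC_{MS}(M)\subset C_T(M)$.
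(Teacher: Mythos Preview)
Your proof is correct and matches the paper's approach: the paper presents this corollary as an immediate consequence of Proposition~\ref{prop:periodic for transversal} together with the homotopy invariance of the Lefschetz numbers, and that is precisely what you have written out.
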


\section{Final discussion and applications}

In this section, we describe deep relations of the considered concepts with Nielsen periodic point theory. In the next step, we give an estimate from below of the number of algebraically finite type mapping classes of surface homeomorphisms.

First, we show that the fact that isotopy classes of Morse--Smale diffeomorphisms are contained in $T1 \cup T2$ follows also from the Nielsen number theory.

We recall that the asymptotic Nielsen number of a self-map $f\colon M\to M$ of a compact manifold $M$, correspondingly the asymptotic generalized  Lefschetz  number of $f$, are defined as
$$ N^\infty(f) =\limsup \sqrt[n]{N(f^n)}\,,\;\;\;{\text{and  respectively}}\;\;\;\;  L_\Gamma^\infty(f) =\limsup \sqrt[n]{\Vert L_\Gamma (f^n)}\Vert \,,$$
where the generalized Lefschetz number  $L_\Gamma(f)$ is defined as an element of the group ring $\ZZ(\Gamma)$ of $\Gamma= \pi_1(T_f)$ where $T_f$ is  the mapping torus $f$ (cf. (1.4) \cite{BoJia}).

For a surface without boundary, \cite[Thm 3.7]{BoJia} states the following: {\it If $f\colon M \to M$ is a homeomorphism of a compact connected surface with $\chi(M)<0$ (orientable or not orientable), then
$$N^\infty(f) = L_\Gamma^\infty(f) = \lambda,$$
where $\lambda$  is the largest stretching (expanding) factor of the pseudo-Anosov pieces in the
Thurston canonical form of $f$.}
Lemma \cite[Lem. 3.6]{BoJia} adds that $\log \lambda= {\bf h}(f)$ is equal to the topological entropy of the canonical representative of $f$.

\begin{remark}\label{rem:infinitely many periods}
In fact, a stronger result is established in \cite[Thm 3.7]{BoJia}. Namely, $N^\infty(f)=  NI^\infty(f)$, where the latter is equal to $\limsup \sqrt[n]{NP_n(f)}$, and $NP_n(f)$ is the so-called $n$-th Nielsen-Jiang prime periodic number (see \cite{BoJia} for a definition, or \cite{Jez-Mar} for a longer exposition). This invariant has the property $ NP_n(f) \leq \vert P_n(f)\vert$. It shows that in the case of a  homeomorphism  of  a surface $S_\g$, the condition  $N^\infty(f)=\lambda >1 $ implies that $P(f)={\underset{n=1}{\overset{\infty}\bigcup}} \,P^n(f)$, and also  ${\rm Per}(f)$,  are infinite. Since $N(f^n)$ and $NP_n(f)$ are homotopy invariants, this yields that a Morse--Smale diffeomorphisms can only occur in the classes $T1$ or $T2$. In other words, it establishes an implication in one direction of Theorem \ref{thm:da Rocha}:  The set  of the isotopy classes of Morse--Smale diffeomorphisms is contained in $T1 \cup T2$.
\end{remark}

\subsection{Mapping classes}

It is very useful to use the language of mapping classes in the study dynamics of homeomorphisms  of surfaces.  It is caused by  two factors. First,
we are interested in  homotopy  properties of homeomorphisms of surfaces. Second, the theory of mapping classes  groups is well-developed, and several theorems we use are formulated in these terms.

By the definition, {\it  the mapping class group} $\Mod(S_\g)$ of $S_\g$ is the quotient group
$$
{\rm Homeo}^+ (S_\g)/{\rm Homeo}_0(S_\g),
$$
where ${\rm Homeo}^+ (S_\g)$ denotes the group of preserving orientation homeomorphisms of $S_\g$, and ${\rm Homeo}_0(S_\g)$ its subgroup of homeomorphisms isotopic to the identity (cf. \cite{Farb-Marg}, \cite{Paris}).
Thus,  the assignment $ \phi \mapsto [\phi] \in \Mod(S_\g)$  is  a surjection from ${\rm Homeo}^+ (S_\g)$ onto $\Mod(S_\g)$.

This allows us to define an action (a representation) of the group $\Mod(S_\g)$
in $H_1(S_\g; \ZZ) \subset  H_1(S_\g; \RR)$. More precisely, for a given class $[\phi]$ of $\phi \in {\rm Homeo}^+ (S_\g)$, we define $\Psi([\phi]):= H_1(\phi) \in Aut(H_1(S_\g;\ZZ))) $, where
$H_1(\phi)$ is the induced homomorphism of $H_1(S_\g; \ZZ)$. It is well defined, i.e. it does not depend on a choice of representative of the class, because
the subgroup ${\rm Homeo}_0(S_\g)$ acts trivially on  $ H_1(S_\g; \ZZ)$. In fact, $H_1(\phi)$ belongs to the group $Sp(2\g,\ZZ)$ of symplectic matrices of size $n$ according to  property (\ref{symplectic_matrix}).

Now we can formulate a classical fact (that we use  
below to study transversal maps) about the mapping class group (cf. \cite[Proposition 7.3]{Farb-Marg}).

\begin{theorem}[H. Burkhardt (1889)]\label{thm:surjection on Sp(2g)}
The homomorphism {\small $\Psi  \colon \Mod (S_\g) \to  Sp(2\g,\mathbb Z)$} is surjective.
\end{theorem}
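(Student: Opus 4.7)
The plan is to exhibit an explicit set of generators for $Sp(2\g,\ZZ)$, each of which lies in the image of $\Psi$, and to realize every such generator as the homological action of a concretely chosen element of $\Mod(S_\g)$. The strategy rests on two classical ingredients: a ``generation by transvections'' theorem for the symplectic group over $\ZZ$, together with the well-known computation of the action of a Dehn twist on $H_1(S_\g;\ZZ)$.

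First, I would invoke the fact that $Sp(2\g,\ZZ)$ is generated by the symplectic transvections
$$
T_v \colon \ZZ^{2\g} \to \ZZ^{2\g}, \qquad T_v(x) = x + \langle x,v\rangle\, v,
$$
as $v$ ranges over primitive vectors of $\ZZ^{2\g}$. This is the symplectic analogue of the classical generation of $SL_n(\ZZ)$ by elementary matrices and is proven by a ``symplectic row reduction'' argument applied to an arbitrary $A \in Sp(2\g,\ZZ)$: one uses transvections to bring the first column to $a_1$, then (using the symplectic condition) deduces that the corresponding row is in a controlled position, and proceeds inductively on $\g$.

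Next, I would use the standard local computation that for a simple closed curve $\gamma \subset S_\g$ with class $[\gamma] \in H_1(S_\g;\ZZ)$, the Dehn twist $T_\gamma \in \Mod(S_\g)$ acts on $H_1(S_\g;\ZZ)$ exactly by the transvection $T_{[\gamma]}$; this follows from a picture in an annular neighborhood of $\gamma$ together with the definition of the algebraic intersection form. To close the loop, I would apply the ``change of coordinates'' principle for surfaces: every primitive class $v \in H_1(S_\g;\ZZ)$ is representable by some non-separating simple closed curve $\gamma \subset S_\g$, so that each generating transvection $T_v$ equals $\Psi([T_\gamma])$, giving the surjectivity of $\Psi$.

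I expect the main technical obstacle to be the purely algebraic lemma on generation of $Sp(2\g,\ZZ)$ by primitive transvections; the two topological inputs (the homological action of a Dehn twist, and the realization of primitive homology classes by simple closed curves) are essentially one-line verifications. A cleaner alternative that bypasses the full row-reduction argument is to quote Humphries' explicit finite set of Dehn twist generators for $\Mod(S_\g)$ along $2\g+1$ curves and to verify directly that their images span $Sp(2\g,\ZZ)$, reducing the proof to a small explicit linear-algebra check.
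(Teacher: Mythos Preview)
Your proposal is a correct outline of the standard modern proof of this classical result, and in fact it is essentially the argument given in the reference the paper cites (Farb--Margalit, \emph{A Primer on Mapping Class Groups}, Section 6.3 and Proposition 7.3). However, note that the paper does \emph{not} supply its own proof of this theorem: it is stated as a classical fact attributed to Burkhardt (1889) with a pointer to \cite{Farb-Marg}, and is then used as a black box in the proof of Proposition~\ref{transv_real}. So there is no ``paper's own proof'' to compare against; your proposal simply fills in what the cited reference contains.

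One small remark on your write-up: you do not actually need the full strength of ``every primitive homology class is represented by a simple closed curve'' if you go the Humphries route, since then only the explicit $2\g+1$ curves enter; conversely, if you go the transvection route, the generation of $Sp(2\g,\ZZ)$ by transvections along \emph{all} primitive vectors is overkill --- it suffices to know generation by transvections along the basis vectors $a_i,b_i$ and $a_i+b_{i+1}$ (or a similar finite list), which follows from elementary matrix manipulations and matches exactly the homology classes of the Humphries curves. Either way the argument is sound.
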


Another classical fact was shown by J. Nielsen by a direct geometrical consideration and a calculation of the characteristic polynomial   (cf. \cite{Nielsen}). Nowadays there are many ways of proving this statement.

\begin{theorem}[J. Nielsen 1944]\label{Nielsen}
$$If \;\; [f] \in T1\cup T2, \;\;{\text{then}}\;\; H_1(f)\;\;{\text{is quasi-unipotent}}\,.$$
\end{theorem}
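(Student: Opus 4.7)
The plan is to show that for some $N \geq 1$ the iterate $H_1(f^N) = H_1(f)^N$ is unipotent; the eigenvalues of $H_1(f)$ will then all be $N$-th roots of unity, which is the quasi-unipotent condition. Since $H_1$ depends only on the homotopy class of $f$, I am free to replace $f$ by any convenient representative of $[f]$.

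\textbf{Case T1.} Pick a periodic representative $g$ of $[f]$ with $g^n = \mathrm{id}_{S_\g}$. Then $H_1(g)^n = I$, so the minimal polynomial of $H_1(g)$ divides $x^n - 1$ and every eigenvalue of $H_1(f) = H_1(g)$ is an $n$-th root of unity.

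\textbf{Case T2.} By the Nielsen--Thurston canonical form there is an $f$-invariant essential multicurve $C = \gamma_1 \cup \cdots \cup \gamma_r$ such that on each component $\Sigma_j$ of the cut surface $S_\g \setminus N(C)$ some iterate of $f$ restricts to a periodic mapping class; by definition of $T2$, no pseudo-Anosov pieces appear. I would then choose $N$ to be a common multiple of the order of the induced permutation of the $\Sigma_j$ and of the periods on each piece, so that $f^N$ preserves each $\Sigma_j$ setwise and restricts to the identity mapping class on each. By the standard exact sequence relating $\Mod(S_\g)$ to the mapping class groups of the pieces (taken relative to the boundary), the kernel of the cutting homomorphism is generated by Dehn twists along the components of $C$, and hence $f^N$ is isotopic to a product $\prod_i T_{\gamma_i}^{m_i}$ for some integers $m_i$.

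\textbf{Conclusion.} Each Dehn twist $T_{\gamma_i}$ acts on $H_1(S_\g;\ZZ)$ by the transvection $[\alpha] \mapsto [\alpha] + \langle [\alpha], [\gamma_i]\rangle [\gamma_i]$, which satisfies $(H_1(T_{\gamma_i}) - I)^2 = 0$ and is therefore unipotent. Since the $\gamma_i$ are disjoint, the Dehn twists commute, so $H_1(f^N) = \prod_i H_1(T_{\gamma_i})^{m_i}$ is a product of commuting unipotent matrices and is itself unipotent, completing the argument. I expect the main technical obstacle to be the structure result identifying $f^N$ with a product of Dehn twists on $C$: this is a classical consequence of the cutting exact sequence in mapping class group theory, but one must be careful about boundary-parallel twists and about how isotopies to the identity on distinct pieces fail to glue across the annuli in $N(C)$, which is precisely the source of the twist exponents $m_i$.
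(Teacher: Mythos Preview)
The paper does not actually prove this theorem: it is stated as a classical fact due to Nielsen, with the remark that ``Nowadays there are many ways of proving this statement,'' and a reference to \cite{Nielsen}. So there is no paper proof to compare against, only the citation.

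Your argument is one of those standard modern proofs and is essentially correct. The T1 case is immediate. For T2, the key structural input you invoke --- that the stabilizer of a multicurve $C$ in $\Mod(S_\g)$ surjects onto the mapping class group of the cut surface with kernel the free abelian group generated by the Dehn twists $T_{\gamma_i}$ --- is exactly the cutting sequence recorded in Farb--Margalit (the paper's reference \cite{Farb-Marg}), so citing it is legitimate in this context. Your transvection computation $(H_1(T_{\gamma_i})-I)^2=0$ is correct since $\langle[\gamma_i],[\gamma_i]\rangle=0$, and disjointness of the $\gamma_i$ gives commutativity, so the product is unipotent as claimed. One small point you might add for completeness: if $f$ is orientation-reversing, a product of Dehn twists is orientation-preserving, so you should take $N$ even; this is harmless since you are free to enlarge $N$.

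Compared with Nielsen's original approach, which the paper describes as a direct geometric analysis together with a computation of the characteristic polynomial, your route is more structural: it trades an explicit eigenvalue calculation for the algebraic machinery of the mapping class group (the cutting exact sequence and the homological action of Dehn twists). The payoff is a cleaner and more conceptual argument, at the cost of importing a nontrivial lemma about the kernel of the cutting map that you correctly flag as the main technical dependency.
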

Combining the   facts stated above, we get the following.

\begin{proposition}\label{prop:N^(f)=1 equivalent MS}
Every $f \in [h]$ of a homeomorphism  $h\colon M\to M$  of orientable or non-orientable surface $M$ such that $N^\infty(h)=1$ is isotopic to a Morse--Smale diffeomorphism, thus it is quasi-unipotent and $\cAP(f)$ is finite. Conversely, for every homeomorphism $f$ of $M$ which is homotopic to a Morse--Smale diffeomorphism,  we have $N^\infty(f)=1$.

Consequently, a class $[f]$ contains a Morse--Smale diffeomorphism if and only if it contains a homeomorphism $f^\prime$ with the entropy ${\bf h}(f^\prime)=0$.
\end{proposition}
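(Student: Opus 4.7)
The plan is to combine the Bowen--Jiang formula $N^\infty=\lambda$ recalled just above the proposition with da Rocha's Theorem \ref{thm:da Rocha} in order to translate the condition $N^\infty(h)=1$ into the algebraically finite type condition, and then to invoke Nielsen's Theorem \ref{Nielsen} together with Proposition \ref{fact:bounded_finite_unipotent} for the quasi-unipotence and the finiteness of $\cAP(f)$.

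First, I would observe that every pseudo-Anosov piece has stretching factor strictly larger than $1$, so the assumption $N^\infty(h)=\lambda=1$ forces the Thurston canonical form of $[h]$ to contain no pseudo-Anosov piece. Hence $[h]\in T1\cup T2$, i.e. $[h]$ is of algebraically finite type. By Theorem \ref{thm:da Rocha} the class $[h]$ then contains a Morse--Smale diffeomorphism, and since by Baer--Epstein homotopy equals isotopy on a surface, every $f\in [h]$ is isotopic to this Morse--Smale representative. Applying Nielsen's Theorem \ref{Nielsen} to $[f]=[h]\in T1\cup T2$ gives that $H_1(f)$ is quasi-unipotent, and Proposition \ref{fact:bounded_finite_unipotent} then yields the finiteness of $\cAP(f)$. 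For the converse direction, if $f$ is homotopic to a Morse--Smale diffeomorphism, then Theorem \ref{thm:da Rocha} places $[f]$ in $T1\cup T2$, so the Thurston canonical form carries no pseudo-Anosov component, hence $\lambda=1$; by homotopy invariance of the Nielsen numbers (and thus of $N^\infty$) we obtain $N^\infty(f)=\lambda=1$.

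For the ``Consequently'' clause I would use Lemma 3.6 of \cite{BoJia}, which gives $\log\lambda={\bf h}(f')$ for the canonical representative $f'$ of $[f]$. If $[f]$ contains a Morse--Smale diffeomorphism, then by the previous paragraph $\lambda=1$, and the canonical representative (periodic in case T1, reducible with only periodic pieces in case T2) has zero topological entropy, providing the required $f'$. Conversely, if some $f'\in [f]$ satisfies ${\bf h}(f')=0$, the Thurston--Handel minimization principle $\log\lambda\le{\bf h}(f')$ across the isotopy class forces $\lambda=1$, whence $[f]\in T1\cup T2$ and Theorem \ref{thm:da Rocha} produces a Morse--Smale representative in $[f]$.

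The main obstacle I expect is the backward direction of the entropy equivalence, since it requires the nontrivial fact that $\log\lambda$ is the infimum of topological entropies over the whole isotopy class of a surface homeomorphism; the forward direction only needs that finite-order pieces contribute zero entropy. This variational statement is classical (Thurston, with detailed accounts in Fathi--Laudenbach--Po\'enaru and Handel), but care is needed to cite it precisely. Once it is in hand, the rest of the proof is a short chain of implications through Theorems \ref{thm:Thurston classification}, \ref{thm:da Rocha} and \ref{Nielsen} together with Proposition \ref{fact:bounded_finite_unipotent}.
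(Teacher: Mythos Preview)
Your proposal is correct and follows the paper's argument closely for the first implication ($N^\infty(h)=1\Rightarrow$ Morse--Smale representative, quasi-unipotence, finiteness of $\cAP$): both you and the paper use the Boju Jiang identity $N^\infty=\lambda$, conclude there is no pseudo-Anosov piece, and then invoke da Rocha, Nielsen, and Proposition~\ref{fact:bounded_finite_unipotent} in the same order.

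The genuine difference is in the converse. You argue structurally: a Morse--Smale representative forces, via the necessary direction of Theorem~\ref{thm:da Rocha}, that $[f]\in T1\cup T2$, hence no pseudo-Anosov piece, hence $\lambda=1$, hence $N^\infty(f)=1$. The paper instead argues dynamically by contraposition: if $N^\infty(h)>1$, then by Remark~\ref{rem:infinitely many periods} (the bound $NP_n(f)\le |P_n(f)|$) every $h'\sim h$ has infinitely many periodic orbits, which is impossible for a Morse--Smale diffeomorphism. Your route uses da Rocha's theorem twice; the paper's route avoids da Rocha for the converse and extracts the conclusion directly from the growth of Nielsen periodic numbers. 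Both are valid; the paper's argument has the mild advantage that it does not rely on the convention that $\lambda=1$ when there is no pseudo-Anosov piece in the statement of the Boju Jiang formula.

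For the entropy equivalence you are more explicit than the paper, which simply defers to \cite[Lem.~3.6]{BoJia}. You correctly isolate the nontrivial ingredient in the backward direction, namely that $\log\lambda$ is the infimum of topological entropies over the isotopy class; your citation of Thurston/FLP/Handel for this variational fact is appropriate and fills a gap the paper leaves implicit.
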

\begin{proof}

If $N^\infty(h)=1$, then by the Boju Jiang theorem there is not a pseudo-Anosov piece  in the canonical Nielsen--Thurston form of $h$.
Indeed, this theorem states that  if  $N^\infty(h)= \lambda$, then $\lambda $ is the largest stretching factor of the pseudo-Anosov pieces  in the canonical form of $h$. However, for a pseudo-Anosov diffeomorphism we have $\lambda >1$, which leads to a contradiction if it would be such a piece.

Consequently, $[h] \in T1 \cup T2$. Now, by  the Nielsen theorem (Theorem \ref{Nielsen}), it is  quasi-unipotent. By  da Rocha's Theorem  \ref{thm:da Rocha}, there is a Morse--Smale diffeomorphism $\tilde{h} \in [h]$.

{Next, from Fact \ref{fact:bounded_finite_unipotent}, it follows that $\cAP(h)=\cAP(\tilde{h})$ is finite.}

{Finally, if $N^\infty(h)>1$, then  $h$ and every $h^\prime \sim h$  has infinitely many periodic orbits, which gives a contradiction if $h^\prime $ is a Morse--Smale diffeomorphism.}

The last part regarding entropy follows from the already quoted \cite[Lem. 3.6]{BoJia}.
\end{proof}

Let us remind that the mapping class group $\Mod(S_\g)$
can be also defined as the quotient
${\rm Diffeo}^+(S_\g)/{\rm Diffeo}_0(S_\g)$,
(cf. \cite{Farb-Marg}).

\begin{proposition}\label{transv_real}
For every symplectic or antisymplectic matrix $A\in Gl(2\g,\ZZ)$, there exists a transversal map $f \in C_T(S_\g) $ for which the induced automorphism $H_1(f)$  is equal to  $A$.
\end{proposition}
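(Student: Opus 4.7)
The plan is to combine three ingredients already set up in the paper: Burkhardt's surjectivity (Theorem \ref{thm:surjection on Sp(2g)}), the existence of orientation-reversing homeomorphisms of $S_\g$ (to access the antisymplectic coset), and the genericity of transversal maps (Theorem \ref{thm:Seidel}). First I would handle the symplectic case: given $A \in Sp(2\g,\ZZ)$, Theorem \ref{thm:surjection on Sp(2g)} furnishes an orientation-preserving homeomorphism $\phi \colon S_\g \to S_\g$ with $H_1(\phi) = A$. Since $H_1$ is a homotopy invariant and Theorem \ref{thm:Seidel} guarantees that every continuous self-map of $S_\g$ is homotopic to a transversal map, we can replace $\phi$ by a homotopic $f \in C_T(S_\g)$ with $H_1(f) = H_1(\phi) = A$.

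For the antisymplectic case, the idea is to translate the problem back to the symplectic case by composing with a fixed orientation-reverser. Fix once and for all an orientation-reversing involution $r \colon S_\g \to S_\g$ (for instance, the reflection through a hyperelliptic plane of symmetry), and set $R := H_1(r)$, which is antisymplectic by the discussion preceding (\ref{anti}). A short computation shows that the product of two antisymplectic matrices is symplectic, so $B := A R^{-1} \in Sp(2\g,\ZZ)$. By the symplectic case, there is an orientation-preserving homeomorphism $\phi$ with $H_1(\phi) = B$; the composition $\phi \circ r$ is an orientation-reversing homeomorphism of $S_\g$ with $H_1(\phi \circ r) = B R = A$. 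A final application of Theorem \ref{thm:Seidel} replaces $\phi \circ r$ by a homotopic transversal map $f$ with $H_1(f) = A$.

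The only routine calculation hidden in the plan is to verify that $R^T \Omega R = -\Omega$ implies $(R^{-1})^T \Omega R^{-1} = -\Omega$, and hence $(AR^{-1})^T \Omega (AR^{-1}) = \Omega$; this is a one-line manipulation that I would include. There is no serious obstacle here, since the substantive content has already been absorbed into Burkhardt's theorem and the Dold--Seidel genericity theorem; the only structural point is that the antisymplectic matrices form a single coset of $Sp(2\g,\ZZ)$ inside the group of matrices preserving $\Omega$ up to sign, so a single fixed $r$ is enough to transport the whole coset back to $Sp(2\g,\ZZ)$.
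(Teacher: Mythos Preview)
Your proposal is correct and follows essentially the same route as the paper: Burkhardt surjectivity plus Theorem~\ref{thm:Seidel} for the symplectic case, and for the antisymplectic case reducing to the symplectic one by composing with a fixed orientation-reversing homeomorphism. The only cosmetic difference is bookkeeping---the paper multiplies on the right by $H_1(h)$ and composes with $h^{-1}$, whereas you multiply by $R^{-1}$ and compose with $r$---but the content is identical.
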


\begin{proof}
By Theorem \ref{thm:surjection on Sp(2g)},  for every $A\in Sp(2 \g, \ZZ)$, there exists a diffeomorphism $h$ of $S_\g$ for which the induced automorphism $H_1(h)$  is equal to $A$.
Next, we replace $h$ by a transversal map $f\in C_T(S_\g)$ homotopic to $h$ using Theorem~\ref{thm:Seidel}. 

In the case of an antisymplectic matrix $A$, take any orientation-reversing homeomorphism $h\colon S_\g \to S_\g$. Then, the product $AH_1(h)$ is symplectic, and by the first part of the proof induced by a transversal map $f'$. Thus, $f = f' \circ h^{-1}$ induces $H_1(f)=A$, and again can be approximated by a transversal map.
\end{proof}

\subsection{An estimate of the number of conjugacy classes of algebraically finite type mapping classes}

The formulas of Theorem \ref{thm:realization by algebraically finite} lead to an estimate from below of the number of conjugacy classes of algebraically finite type, mapping classes for a fixed genus~$\g$.
Let us consider two homeomorphisms $f', f''$ of a surface $M$.
If they are conjugated, then they induce conjugated homomorphisms on each homology group, in particular $\Psi(f^\prime)= H_1(f^\prime)$ and $\Psi(f^{\prime\prime})= H_1(f^{\prime\prime})$ are conjugated matrices in $Sp(2\g,\mathbf{Z})$. Since conjugated matrices have the same traces, $f^\prime$ and $f^{\prime\prime}$ have the same periodic expansions of sequences of Lefschetz numbers of iterations.
As a result, the number of different periodic expansions estimates the number of conjugacy classes of given elements in $\Mod(M)$.

This observation and our construction allow us to find an estimate of the number of conjugacy classes of homotopy classes of diffeomorphisms in $T1 \cup T2$.

\begin{definition}\label{partition}
\rm The number of ways of writing the
integer $N$ as a sum of positive integers, where the order of addends
is not considered significant, is denoted by $P(N)$ and  is called {\it the number of unrestricted partitions}.
\end{definition}

A partition of $N$ can be represented by a sequence $(p_1,\ldots,p_N)$ such that $N = \sum_{k=0}^N p_k k$, so $p_k \geq 0$ is the number of integers $k$ in the partition of $N$.

More information about the functions $P(N)$  can be found in \cite{Wei1}. At this moment, let us only mention
the asymptotic behavior of it (Hardy-Ramanujan 1918):
\begin{equation}\label{equa:asymptotic}
\;\;\;\; P(N)\,\sim\, \frac{1}{4 N\sqrt{3}} \, e^{\pi \sqrt{2N/3}}.
\end{equation}

\begin{theorem}\label{thm:estimate of algebraically finite}
The number of conjugacy classes of algebraically finite type mapping classes of an orientable or non-orientable closed surface of genus $\g$ is estimated from below by $P(\g)$, the number of unrestricted partitions of $\g$. Consequently, there are at least $P(\g)$ conjugacy classes of ($S_\g$ and $N_\g$) mapping classes containing Morse--Smale diffeomorphisms. 

Finally, the asymptotic growth in $\g$  of this number is greater than or equal to $ \frac{1}{4 \g\sqrt{3}} \, e^{\pi \sqrt{2\g/3}}\, $.
\end{theorem}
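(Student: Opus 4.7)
The plan is to separate conjugacy classes by the M\"obius--dual coefficients $(a_n(f))$ of the Lefschetz sequence. If $[f']$ and $[f'']$ are conjugate in $\Mod(M)$, then $H_1(f')$ and $H_1(f'')$ are conjugate matrices, so $\tr H_1((f')^n) = \tr H_1((f'')^n)$ for every $n$; since the $H_0$ and $H_2$ contributions to $L(f^n)$ depend only on $M$ (and, for $H_2$, on orientability), this forces $L((f')^n)=L((f'')^n)$ for all $n$, and hence $(a_n(f'))_n = (a_n(f''))_n$. It therefore suffices to exhibit, for each closed surface of genus $\g$, a family of at least $P(\g)$ algebraically finite type representatives whose coefficient sequences are pairwise distinct.

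For each partition $\mathbf p = (p_1, p_2, \ldots)$ of $\g$, I would modify the construction of Theorem \ref{thm:realization by algebraically finite}(1) by joining in a chain exactly $p_k$ copies of the periodic piece $\Sigma_k$ for every $k$ with $p_k > 0$ (taking the identity on the genus-$1$ piece when $k=1$), and capping the two remaining boundary circles with disks carrying appropriate rotations. The resulting closed orientable surface has genus $\sum_k p_k k = \g$, and the composite map $f_{\mathbf p}$ is of algebraically finite type because cutting along the core circles of the joining cylinders leaves only periodic pieces. On the standard symplectic basis, the matrix of $H_1(f_{\mathbf p})$ is the block-diagonal sum of $2 p_k$ copies of the $k$-cycle permutation matrix, so $\tr H_1(f_{\mathbf p}^l) = \sum_{k\geq 1} 2 p_k \reg_k(l)$ and
\begin{equation*}
L(f_{\mathbf p}^l) \;=\; 2\reg_1(l) - \sum_{k \geq 1} 2 p_k \reg_k(l) \;=\; (2 - 2 p_1)\reg_1(l) - \sum_{k \geq 2} 2 p_k \reg_k(l),
\end{equation*}
so that $a_1(f_{\mathbf p}) = 2 - 2 p_1$ and $a_k(f_{\mathbf p}) = -2 p_k$ for $k \geq 2$. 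The partition $\mathbf p$ is uniquely recoverable from these coefficients, so distinct partitions produce non-conjugate mapping classes, and by Theorem \ref{thm:da Rocha} each such class contains a Morse--Smale diffeomorphism; this yields at least $P(\g)$ Morse--Smale conjugacy classes on $S_\g$.

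The non-orientable case is parallel: the analogous multi-copy modification of the construction in Theorem \ref{thm:realization by algebraically finite}(3), together with the torsion-quotient trace computation on $H_1/\mathrm T$ already carried out there, yields an $L(f_{\mathbf p}^l)$ whose M\"obius dual still uniquely encodes $\mathbf p$, giving at least $P(\g)$ conjugacy classes on $N_\g$ as well. The asymptotic bound $\tfrac{1}{4\g\sqrt 3}\, e^{\pi\sqrt{2\g/3}}$ is then immediate from the Hardy--Ramanujan formula~(\ref{equa:asymptotic}).

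The main obstacle is the homological bookkeeping for the multi-copy construction: one must verify that passing from a single copy to $p_k$ copies of $\Sigma_k$ --- together with the joining cylinders and the disk caps --- really produces the clean characteristic polynomial $\prod_k (x^k - 1)^{2 p_k}$ with no hidden off-block interactions, and in the non-orientable setting that the torsion adjustment of the standard basis extends without introducing new cross-terms between distinct copies. Each of these is a local check already performed in the $p_k\in\{0,1\}$ case inside the proof of Theorem \ref{thm:realization by algebraically finite}, so the multi-copy argument reduces to repeating that check block by block.
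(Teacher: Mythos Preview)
Your proposal is correct and follows essentially the same approach as the paper: both separate conjugacy classes via the Dold coefficients $(a_n)$, build for each partition $\mathbf p$ of $\g$ a map by gluing $p_k$ copies of the periodic piece $\Sigma_k$ from Theorem~\ref{thm:realization by algebraically finite}, compute the resulting coefficients (orientable: $a_1=2-2p_1$, $a_k=-2p_k$ for $k\ge 2$; non-orientable: $a_1=2-p_1$, $a_k=-p_k$), observe these recover $\mathbf p$ uniquely, and then invoke da Rocha's Theorem~\ref{thm:da Rocha} and the Hardy--Ramanujan asymptotic. The only difference is presentational: the paper carries out the non-orientable coefficient formula explicitly, whereas you defer to the torsion-quotient computation already in Theorem~\ref{thm:realization by algebraically finite}(3).
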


\begin{proof}

Let us first consider the orientable case. We will provide a correspondence between partitions $(p_1,\ldots,p_\g)$ of $\g$ and homeomorphisms $f\colon S_\g \to S_\g$ of algebraically finite type given by
\begin{equation}\label{formula:correspondence_partitions_and_dold_coeff_orient}
	a_n(f) = \begin{cases}
		-2p_n \ \ \ \ \ \ \ \ \ \text{ if } \ n\neq 1, \\
		-2(p_1-1) \ \; \text{ if } \ n =1,
	\end{cases}
\end{equation}
where $a_n(f)$ denote the coefficients in the periodic expansion of $f$.

We  apply the construction from the proof of Theorem \ref{thm:realization by algebraically finite}. Recall that in the orientation-preserving case (1), we realized $\cA$ as $\cAP(f)$ of a map $f \colon S_\g \to S_\g$ such that $a_k(f) = -2$ for $k\in \cA \setminus \{1\}$, $a_1(f) = 2$ if $1 \in \cA$, and obviously $a_k(f) =0 $ for $k \notin \cA$. From the description of the genus, or just from the construction, it follows that $2\g = 2-\sum_k a_k(f) \cdot k$, and so one can easily check that $\g = \sum_k p_k k$ by  formula (\ref{formula:correspondence_partitions_and_dold_coeff_orient}). Recall that the surface $S_\g$ was formed from the pieces $\Sigma_k$ for $k \in \cA'$ and connecting cylinders.

Now, for a given partition $(p_1,\ldots,p_\g)$ of $\g$ and every $k$, we take $p_k$ copies of the surface $\Sigma_k$ of genus $k$ together with defined periodic homeomorphisms, and glue them appropriately as in Theorem \ref{thm:realization by algebraically finite}. From the construction, it is straightforward that $\g = \sum p_k k$ is the genus of the resulted surface, and
$
\tr H_1(f^n) = \sum_k 2p_k \reg_k(n).
$
Therefore, we obtained an algebraically finite type homeomorphism $f$ of $S_\g$ such that
$$
L(f^n) = 2 - \tr H_1(f^n) = \sum_k a_k(f) \reg_k(n)
$$
is consistent with  formula (\ref{formula:correspondence_partitions_and_dold_coeff_orient}).

Now, consider the non-orientble case. The analogous correspondence between partitions $(p_1,\ldots,p_\g)$ of $\g$ and homeomorphisms $f\colon N_\g \to N_\g$ of algebraically finite type is given by
\begin{equation}\label{formula:correspondence_partitions_and_dold_coeff_non-orient}
	a_n(f) = \begin{cases}
		-p_n \ \ \ \ \ \ \text{ if } \ n\neq 1, \\
		2-p_1 \ \; \ \text{ if } \ n =1.
	\end{cases}
\end{equation}

Recall that in the construction in the proof of Theorem \ref{thm:realization by algebraically finite} in the non-orientable case (3), we realized $\cA$ as $\cAP(f)$ of $f\colon N_\g \to N_\g$ such that $a_k(f) = -1$ for $k \in \cA \setminus \{1\}$. Similarly, for a given partition $(p_1,\ldots,p_\g)$ of $\g$, we repeat the construction with $p_k$ copies of the non-orientable surface $\Sigma_k$ of genus $k$ on which the obtained function is periodic. Therefore, the obtained surface is clearly of genus $\g =  \sum p_k k$, and by the computations during the proof of Theorem \ref{thm:realization by algebraically finite}, we get
$$
\tr H_1(f^n) =  - \reg_1(n)+\sum_k p_k \reg_k(n).
$$
Thus, $f$ is an algebraically finite type homeomorphism of $N_\g$ such that
$$
L(f^n) = 1 - \tr H_1(f^n) = \sum_k a_k(f) \reg_k(n),
$$
where the coefficients $a_k(f)$ of the periodic expansion of $f$ are determined by (\ref{formula:correspondence_partitions_and_dold_coeff_non-orient}).

In both  cases, by Theorem \ref{thm:da Rocha} of da Rocha, the mapping class of $f$ contains a Morse--Smale representative. The relations (\ref{formula:correspondence_partitions_and_dold_coeff_orient}) and (\ref{formula:correspondence_partitions_and_dold_coeff_non-orient}) show that diffeomorphisms corresponding in our construction to different unrestricted partitions of $\g$ have different periodic expansions, and consequently their homotopy classes are not conjugated. 
\end{proof}

\appendix
\section{Algebraic periods and minimal sets of Lefschetz periods}\label{sec:Appendix2}

We recall that the zeta function for the sequence of Lefschetz numbers $L = (l_n) := (L(f^n))$ of iterations is defined as
\begin{equation}\label{equa:zeta}
\;\zeta_f(z) = \zeta(L;z):= \exp\Big({\underset{n=1}{\overset{\infty}\sum}} \,\frac{l_n}{n} z^n\Big).
\end{equation}
In  \cite{LlSi1}, the authors defined {\it the minimal set of Lefschetz periods} of a diffeomorphism $f\colon M\to M$, denoted ${\rm MPer}_L(f)$, in the following way.

\begin{definition}\label{defi::minimal Lefschetz periods}
\begin{equation}\label{equa:minimal Lefschetz periods}
	\begin{matrix} {\rm MPer}_L(f) := \bigcap \{ r_1, \, \dots \,, r_\eta \}
	\end{matrix}
\end{equation}
where the intersection is taken over all  representations of $\;\zeta(L;z) \;$  as
$$ \zeta(L;z)= {\underset{i=1}{\overset{\eta}\prod}} \, (1+\Delta_i\, z^{r_i})^{m_i} =
{\underset{j=1}{\overset{\eta^\prime}\prod}} \, (1- z^{r_j^{\prime}})^{m^\prime_j} \,
{\underset{k=1}{\overset{\eta^{\prime\prime}}\prod}} \, (1+ z^{r_k^{\prime\prime}})^{m^{\prime\prime}_k}\,, 
$$
where $r_i,r_j^{\prime},r_k^{\prime\prime} \in \NN$, $m_i,\, m_j^\prime, m_k^{\prime\prime} \in \ZZ$, and $\Delta_i = \pm 1$, i.e. we take into account  rational representations of $\zeta(L;z)$  as products of powers of polynomials $(1+\Delta_i\, z^{r_i})$.
\end{definition}

Geometrically, it is known (see the generalized Franks formula being a consequence of (3.3.9) in \cite{Jez-Mar}) that $\zeta(L;z)$ has such a representation, with each term related to some periodic orbit in case of transversal maps.

However, the factors and their amount are not canonically determined by the sequence $(L(f^n))$ in this formula. Llibre and coauthors derived the set ${\rm MPer}_L(f)$ for several examples (see the references of \cite{LlSi3}), observing  that it does not contain even numbers by a simple algebraic argument included here in the proof of Proposition \ref{prop:final algebraic periods} (cf. \cite{LlSi2}).
Note also that, formally, we have infinitely many of such rational representations, potentially with several possible values $\eta$, $r_i$, and $m_i$, which geometrically reflects the fact that periodic orbits of high periods may appear, whose contributions to $\zeta(L,z)$ annihilate one another.

Recently, in \cite{GLM}, the authors showed that for  a transversal  map $f\in C_T(M)$  of a manifold $M$, we have $ {\rm MPer}_L(f) = \cAP(f) \cap (2\NN-1)=\cAP_{odd}(f)$ using the periodic expansion of the sequence $(L(f^n))$. Originally, this theorem is stated  for Morse--Smale diffeomorphisms, but the argument holds for the class of transversal maps. It is worth pointing out that in \cite{GLM} the fact that all summands in representation of $(L(f^n))$  come from the geometric representation of $\zeta(L,z)$ is used.
Below, we present another purely algebraic way of showing that  ${\rm MPer}_L(f) =\cAP_{odd}(f)$ for a larger class of maps.

The Lefschetz zeta function $\zeta(L;z)$ is a rational function over $\ZZ$ (cf. \cite[(3.1.27)]{Jez-Mar}). Moreover,  $\zeta(L;z)$ has the following multiplicative representation (cf. \cite[(3.1.22)]{Jez-Mar}):
\begin{equation}\label{equa:multiplicative presentation of zeta}
\zeta(L;z)\,=\, {\underset{n=1}{\overset{\infty}{\,\prod\,}}}\, (1-z^n)^{a_n(f)}\,.
\end{equation}
Note that in the case when the sequence $(L(f^n))$ is bounded, e.g. if $f$ is a Morse--Smale diffeomorphism, the set $\cAP(f)=\{n: a_n(f)\neq0\}$ is finite, and consequently  the product (\ref{equa:multiplicative presentation of zeta}) is finite.

Moreover, the formula  (\ref{equa:multiplicative presentation of zeta})  is a unique rational expression of $\zeta_f(z)$ in terms of polynomials $(1-z^n)$ since none of the polynomials $(1-z^n)$ can be expressed as a rational function of polynomials $(1-z^k)$ for $k\neq n$.
Indeed, let
$
(1-z^n) = \prod_{k \neq n} (1-z^k)^{c_k}
$
for $c_k \in \ZZ$, and finitely many $c_k \neq 0$, and let $k_0$ be the largest number such that $c_{k_0} \neq 0$.
\begin{itemize}
\item If $k_0 < n$, then the left-hand side has a zero at a primitive root of unity of degree $n$, but the right-hand side does not.
\item If $k_0 > n$, then the right-hand side has a zero or pole at a primitive root of unity of degree $k_0$, but the left-hand side does not.
\end{itemize}

\begin{proposition}\label{prop:final algebraic periods}
Let $f \colon X\to X$ be a map of a finite CW-complex $X$ such that the sequence $(L(f^n))$ of Lefschetz numbers of iterations is bounded. Then,
$$
{\rm MPer}_L(f) = \cAP_{odd}(f).
$$
\end{proposition}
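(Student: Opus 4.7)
The plan is to exploit the uniqueness of the canonical multiplicative representation
\begin{equation*}
\zeta(L;z) \;=\; \prod_{n} (1-z^n)^{a_n(f)}
\end{equation*}
from (\ref{equa:multiplicative presentation of zeta}), which is a \emph{finite} product under our boundedness hypothesis, together with the key identity $(1+z^m) = (1-z^{2m})(1-z^m)^{-1}$. The parity asymmetry of this identity (the doubling $m\mapsto 2m$ feeds only even exponents) is what will distinguish odd from even $n$ and drive both inclusions.

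For the inclusion $\cAP_{odd}(f)\subseteq {\rm MPer}_L(f)$, take any representation $\zeta(L;z) = \prod_{i=1}^{\eta}(1+\Delta_i z^{r_i})^{m_i}$ as in Definition \ref{defi::minimal Lefschetz periods}, and expand every factor with $\Delta_i=+1$ through the identity above to rewrite it in the form $\prod_n(1-z^n)^{c_n}$. By the uniqueness proved in the paragraph preceding the statement (evaluation at primitive roots of unity), $c_n = a_n(f)$ for every $n$. For \emph{odd} $n$, no contribution to $c_n$ can come from a $(1-z^{2r_i})$-term since $2r_i$ is always even, and therefore
\begin{equation*}
a_n(f) \;=\; c_n \;=\; -\sum_{i:\,r_i = n}\Delta_i\, m_i .
\end{equation*}
Hence whenever $n$ is odd and $a_n(f)\neq 0$, at least one index $r_i$ must equal $n$, so $n\in\{r_1,\dots,r_\eta\}$. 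Intersecting over all representations yields $\cAP_{odd}(f)\subseteq {\rm MPer}_L(f)$.

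For the reverse inclusion, we exhibit for each $n_0\notin \cAP_{odd}(f)$ a single admissible representation whose index set omits $n_0$. If $n_0$ is odd, then $a_{n_0}(f)=0$ and the canonical representation $\prod_n(1-z^n)^{a_n(f)}$ already avoids $n_0$. If $n_0$ is even (and $a_{n_0}(f)\neq 0$, otherwise the same canonical representation works), write $n_0 = 2^s m$ with $m$ odd and apply the identity $(1-z^{2k}) = (1-z^k)(1+z^k)$ iteratively to replace the factor $(1-z^{n_0})^{a_{n_0}(f)}$ by
\begin{equation*}
(1-z^m)^{a_{n_0}(f)}\prod_{j=1}^{s}(1+z^{n_0/2^{j}})^{a_{n_0}(f)} .
\end{equation*}
The resulting product is still of the form appearing in Definition \ref{defi::minimal Lefschetz periods}, and its new indices are $n_0/2,\dots,n_0/2^{s-1},m$, each strictly smaller than $n_0$. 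Hence $n_0\notin{\rm MPer}_L(f)$.

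The main obstacle is the uniqueness of the $(1-z^n)$-representation, which has already been settled in the excerpt via primitive roots of unity. Once that is in hand, the argument reduces to parity bookkeeping: odd algebraic periods are \emph{rigid} under any reshuffling of factors because no $(1+z^m)^{m_i}$ can produce an odd-index $(1-z^n)$ term, while even algebraic periods can always be split into smaller-index pieces via the doubling identity. Combining the two inclusions gives ${\rm MPer}_L(f) = \cAP_{odd}(f)$.
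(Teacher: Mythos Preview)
Your proof is correct and follows essentially the same approach as the paper: both exploit the uniqueness of the canonical representation $\prod_n(1-z^n)^{a_n(f)}$, use the identity $(1+z^m)=(1-z^{2m})(1-z^m)^{-1}$ to convert an arbitrary representation into the canonical one (deducing that every odd $n\in\cAP_{odd}(f)$ must occur among the $r_i$), and use the inverse identity $(1-z^{2k})=(1-z^k)(1+z^k)$ to produce representations avoiding any given even index. The only difference is organizational: the paper first shows ${\rm MPer}_L(f)\subset\cAP(f)$ and ${\rm MPer}_L(f)\cap 2\NN=\emptyset$ separately, whereas you handle the inclusion ${\rm MPer}_L(f)\subset\cAP_{odd}(f)$ by exhibiting, for each $n_0\notin\cAP_{odd}(f)$, a single explicit representation avoiding $n_0$ (iterating the splitting down to the odd part, which is more than needed---one split already suffices---but harmless).
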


\begin{proof}
We have just three steps:
\begin{enumerate}
	\item The formula (\ref{equa:multiplicative presentation of zeta}) gives ${\rm MPer}_L(f) \subset \cAP(f)$.
	\item Since $(1-z^{2n}) = (1-z^n)(1+z^n)$ and $(1+z^{2n}) = \frac{1-z^{4n}}{(1-z^n)(1+z^n)}$, 
	$
	{\rm MPer}_L(f) \cap 2\NN = \emptyset.
	$
	
	\item Finally, if
	\begin{equation}\label{equa:zeta_MPer_arbitrary_form}
		\zeta_f(z) = \prod (1-z^k)^{c_k} \cdot  \prod (1+z^k)^{d_k},
	\end{equation}
	then use $(1+z^k) = \frac{1-z^{2k}}{1-z^k}$ to write
	$$
	\zeta_f(z) = \prod (1-z^k)^{c_k} \cdot  \prod \frac{(1-z^{2k})^{d_k}}{(1-z^k)^{d_k}} = \prod (1-z^k)^{e_k},
	$$
	where $e_k = c_k + d_{k/2} - d_k$ if $k$ is even and $e_k = c_k - d_k$ if $k$ is odd.
	Since the form (\ref{equa:multiplicative presentation of zeta}) is unique, $e_k = a_k(f)$. This means that $c_n - d_n = a_n(f) \neq 0$ for $n \in \cAP_{odd}(f)$. In particular, $c_n \neq 0$ or $d_n \neq 0$, so $n\in {\rm MPer}_L(f)$ because (\ref{equa:zeta_MPer_arbitrary_form}) is arbitrary.
\end{enumerate}
Therefore, ${\rm MPer}_L(f) = \cAP_{odd}(f)$.
\end{proof}


\begin{thebibliography}{99}



\bibitem{Bab-Bog}
\newblock I. K. Babenko and S. A. Bogatyĭ,	
\newblock {\it Behavior of the index of periodic points under iterations of a mapping}, (in Russian)	
\newblock {Izv. Akad. Nauk SSSR Ser. Mat.}, \textbf{55} (1991), 3-31; translation in {Math. USSR-Izv.}, {\bf 38} (1992), 1-26.

\bibitem{Bez-Grin}
\newblock A. N. Bezdenezhykh and V. Z. Grines,	
\newblock {\it Realization of gradient-like diffeomorphisms of two-dimensional manifolds},	
\newblock {Sel. Math. Sov.}, {\bf 11} (1992), 19-23.

\bibitem{Bla-Fra}
\newblock P. Blanchard and J. Franks,	
\newblock {\it The dynamical complexity of orientation reversing homeomorphisms of surfaces},	
\newblock {Invent Math}, \textbf{62} (1980), 333-339.

\bibitem{BGW}
\newblock J. Byszewski, G. Graff and T. Ward,	
\newblock {\it Dold sequences, periodic points, and dynamics},	
\newblock {Bull. Lond. Math. Soc.}, {\bf 53} (2021), 1263-1298.

\bibitem{Dold}
\newblock A. Dold,	
\newblock {\it Fixed point indices of iterated maps},	
\newblock {Inventiones Math.}, {\bf 74} (1985), 419-435.

\bibitem{EPPW}
\newblock G. Everest, A. J. van der Poorten, Y. Puri and T. Ward,	
\newblock {\it Integer sequences and periodic points},	
\newblock {J. Integer Seq.}, {\bf 5} (2002), article 02.2.3, 10 pp.

\bibitem{Farb-Marg}
\newblock B. Farb and D. Margalit,	
\newblock {\it A Primer on Mapping Class Groups},	
\newblock Princeton Math. Ser., 49, Princeton University Press, Princeton, NJ, 2012.

\bibitem{FLP}
\newblock A. Fathi, F. Laudenbach and V. Ponearu,	
\newblock {\it Travaux de Thurston sur les surfaces},	
\newblock Asterisque, {\bf 66–67}, {Soc. Math. France}, (1979), 267-282, English translation by D. Margalit and D. J. Kim: Thurston's Work on Surfaces (MN-48), Princeton University Press, 2012.

\bibitem{Franks}
\newblock J. M. Franks,
\newblock {\it Some smooth maps with infinitely many hiperbolic point},	
\newblock {Trans. Am. Math. Soc.}, {\bf 226} (1977), 175-179.

\bibitem{GLM}
\newblock G. Graff, M. Lebied\'z and A. Myszkowski,	
\newblock {\it Periodic expansion in determining minimal sets of Lefschetz periods for Morse–Smale diffeomorphisms},	
\newblock {J. Fixed Point Theory Appl.}, \textbf{21} (2019), Paper No. 47, 21 pp.

\bibitem{GLN-P}
\newblock G. Graff, M. Lebied\'z and P. Nowak-Przygodzki,	
\newblock {\it Generating sequences of Lefschetz numbers of iterates},	
\newblock {Monatsh. Math.}, {\bf 188} (2019), 511-525.

\bibitem{Gri-Med-Poch}
\newblock V. Z. Grines, T. V. Medvedev and O. V. Pochinka,	
\newblock {\it Dynamical Systems on $2$- and $3$-Manifolds},	
\newblock Springer, 2018.

\bibitem{Gri-Mor-Poch}
\newblock V. Grines, A. Morozov and O. Pochinka,	
\newblock {\it Determination of the homotopy Type of a Morse--Smale diffeomorphism on an orientable surface by a Heteroclinic intersection},	
\newblock {Qual. Theory Dyn. Syst.}, {\bf 22} (2023), Paper No. 120, 12 pp.

\bibitem{Jez-Mar}
\newblock J. Jezierski and W. Marzantowicz,	
\newblock {\it Homotopy Methods in Topological Fixed and Periodic Points Theory},	
\newblock in Topological Fixed Point Theory and its Applications, Springer, Dordrecht, 2006.

\bibitem{BoJia}
\newblock B. Jiang,	
\newblock {\it Estimation of the number of periodic orbits},	
\newblock {Pacific J. Math.}, {\bf 172} (1996), 151-185.

\bibitem{LlSi1}
\newblock J. Llibre and V. F. Sirvent,	
\newblock {\it Minimal sets of periods for Morse–Smale diffeomorphisms on orientable compact surfaces},	
\newblock {Houston J. Math.}, {\bf 35} (2009), 835-855.

\bibitem{LlSi2}
\newblock J. Llibre and V. F. Sirvent,	
\newblock {\it Erratum: Minimal sets of periods for Morse--Smale diffeomorphisms on orientable compact surfaces},	
\newblock {Houston J. Math.}, {\bf 36} (2010), 335-336.

\bibitem{LlSi3}
\newblock J. Llibre and V. F. Sirvent,	
\newblock {\it A survey on the minimal sets of Lefschetz periods for Morse--Smale diffeomorphisms on some closed manifolds},	
\newblock {Publ. Mat. Urug.}, {\bf 14} (2013), 171-186.

\bibitem{Mar-NP}
\newblock W. Marzantowicz and P. M. Przygodzki,	
\newblock {\it Finding periodic points of a map by use of a k-adic expansion},	
\newblock {Discrete Contin. Dyn. Syst.}, {\bf 5} (1999), 495-514.

\bibitem{Mysz}
\newblock A. Myszkowski,	
\newblock {\it Periodic Expansion of the Topological Invariants of Iterations in the Theory of Periodic Points of Smooth Maps},	
\newblock Ph.D thesis, Gdańsk University of Technology, 2023 (in Polish).

\bibitem{Nielsen}
\newblock J. Nielsen,	
\newblock {\it Surface transformation classes of algebraically finite type},	
\newblock {Danske Vid. Selsk. Mat.-Fys. Medd.}, {\bf 21} (1944), no. 2, 89 pp.

\bibitem{Paris}
\newblock L.~Paris,	
\newblock {\it Mapping class groups of non-orientable surfaces for beginners},	
\newblock {Winter Braids Lecture Notes}, Winter Braids IV (Dijon, 2014), \textbf{1} (2014), Exp. No. 3, 17 pp.


\bibitem{Rocha}
\newblock L. F. C. da Rocha,
\newblock {\it Characterization of Morse—Smale isotopy classes on surfaces},	
\newblock {Ergodic Theory Dynam. Systems}, {\bf 5} (1985), 107-122.

\bibitem{Seidel}
\newblock P.~Seidel,	
\newblock {\it Trasversalit\"at f\"ur Periodische Punkte Differenziebarer Abbildungen},	
\newblock Mathematisches Institut, Universit\"at Heidelberg, Forschergruppe: Topologie und nichtkommutative Geometrie, Preprint Nr 97, July 1994.

\bibitem{Shub}
\newblock M. Shub,	
\newblock {\it Morse–Smale diffeomorphisms are unipotent on homology. Dynamical systems},	
\newblock {Proceedings of a Symposium Held at the University of Bahia}, Salvador, Brasil 1971; Academic Press, 1973, 489-491.			

\bibitem{Sh-Sull}
\newblock M. Shub and D. Sullivan,	
\newblock {\it A remark on the Lefschetz fixed point formula for differentiable maps},	
\newblock {Topology}, {\bf 13} (1974), 189-191.		

\bibitem{Smale}
\newblock S. Smale,	
\newblock {\it Differentiable dynamical systems},	
\newblock {Bull. Amer. Math. Soc.}, {\bf 73} (1967), 747-817.		

\bibitem{Wei1}
\newblock E. W. Weisstein,	
\newblock {\it Partition function $P$}, {From MathWorld--A Wolfram Web Resource},	
\newblock Available from: \url{http://mathworld.wolfram.com/PartitionFunctionP.html}.			

\bibitem{Windsor} 
\newblock A. J. Windsor,	
\newblock {\it Smoothness is not an obstruction to realizability},	
\newblock {Ergodic Theory Dynam. Systems}, \textbf{28} (2008), 1037-1041.		

\bibitem{Wu} 
\newblock Y. Wu,	
\newblock {\it Canonical reducing curves of surface homeomorphism},	
\newblock {Acta Math. Sinica (N.S.)}, {\bf 3} (1987), 305-313.	


\end{thebibliography}
\end{document}